\newcommand{\f}[2]{\frac{#1}{#2}}
\newcommand{\Bbar}{\overline{B}}
\newcommand{\set}[1]{\left\{#1\right\}}
\newcommand{\Ombar}{\overline{\Omega}}
\newcommand{\Omebar}{\overline{\Omega_{ε}}}
\newcommand{\Om}{\Omega}
\newcommand{\Ome}{\Om_{ε}}
\newcommand{\na}{\nabla}
\newcommand{\calM}{\mathcal{M}}
\newcommand{\ubar}{\overline{u}}
\newcommand{\wbar}{\overline{w}}
\newcommand{\uu}{\underline{u}}
\newcommand{\norm}[2][]{\|#2\|_{#1}}
\newcommand{\nn}{\nonumber}
\newcommand{\kl}[1]{\left(#1\right)}
\newcommand{\ue}{u_{ε}}
\newcommand{\uet}{u_{εt}}
\newcommand{\uert}{u_{εrt}}
\newcommand{\uer}{u_{εr}}
\newcommand{\uerr}{u_{εrr}}
\newcommand{\uerrr}{u_{εrrr}}
\newcommand{\intni}{\int_0^{∞}}
\newcommand{\iBR}{\int_{B_R}}
\DeclareMathOperator{\supp}{supp}
\newtheorem{thm}{Theorem}
\newtheorem{lemma}[thm]{Lemma}
\theoremstyle{definition}
\newtheorem{remark}[thm]{Remark}
\newtheorem{definition}[thm]{Definition}
\author{Marek Fila\footnote{e-mail: fila@fmph.uniba.sk} }
\author{Johannes Lankeit\footnote{e-mail: jlankeit@math.upb.de}}
\affil{\footnotesize Department of Applied Mathematics and Statistics, Comenius University,\\ Mlynsk\'a dolina, 84248 Bratislava, Slovakia}
\title{Solutions with a standing singularity of the gradient for a semilinear parabolic equation}
\title{A semilinear parabolic equation with solutions exhibiting a standing gradient singularity}
\title{Standing singularity of the gradient\\ in a semilinear parabolic equation}
\title{Lack of smoothing for bounded solutions\\ of a semilinear parabolic equation}
\begin{document}
\maketitle 

\begin{abstract}
\noindent
\textbf{Abstract.} 
We study a semilinear parabolic equation that possesses global bounded weak solutions whose gradient has a singularity in the interior
of the domain for all $t>0$. The singularity of these solutions is of the same type as the singularity of a stationary solution to which they
converge as $t\to\infty$.
\\
\textbf{Key words:} singular gradient; semilinear parabolic equation;\\
\textbf{MSC(2010):} 35K58, 35B44
\end{abstract}

For a bounded, smooth domain $\Omega\subsetℝ^n$, $T>0$ and $A\in ℝ$, consider solutions of the problem 
% For $R>0$, $B_R:=\{x\in ℝ^n~|~|x|<R\}$, $T>0$ and $A\in ℝ$, consider radially symmetric solutions of the problem
 \begin{equation}\label{eq:g}
 \begin{cases} u_t = Δu + g(u,\nabla u) & \text{in } \Om\times(0,T),\\
  u=A & \text{on } ∂\Om\times(0,T),\\
  u(\cdot,0)=u_0 & \text{in } \Ombar.
 \end{cases}
 \end{equation}
It is well known (see \cite[Thm.~VI.4.2]{LSU}) that this problem has a unique classical solution for small $T>0$ provided $g\in C^1(ℝ^{n+1})$, $u_0\in C^1(\Ombar)$ and $u_0=A$ on $∂\Om$. In this paper we study a particular case of problem~\eqref{eq:g} in a radially symmetric setting in $B_R:=\{x\in ℝ^n~\mid~|x|<R\}$, $R>0$, where $g$ is a smooth function of $u$ and $u_r$ but $u_0$ is only H\"older continuous in $\Bbar_R$, and there is no classical solution for any $T>0$. In our example, the global bounded weak solution emanating from $u_0$  maintains the singularity of the gradient of $u_0$ for all $t>0$. Thus, there is no smoothing effect which one usually expects from a semilinear uniformly parabolic equation.
 
The equation we will be  interested in is the following:
 \begin{equation}\label{eq:u}
  u_t = Δu + u u_r^3 \qquad \text{in}\quad (B_R\setminus\set{0})\times (0,\infty).
  \end{equation}
%where $B_R:=\{x\in ℝ^n~|~|x|<R\}$, $R>0$.
%  that is to say 
%  \[
%   u_t = Δu + u |u_r|^{p-1}u_r, 
%  \]
%  where $u_r$ denotes the derivative of $u$ in radial direction $\f x{|x|}$. \\
%  
%  Note: For a nonradial problem, strictly speaking $u_r$ is defined only outside of the origin. \\

For $n\in ℕ$, $n\ge 2$, the function 
\begin{equation}\label{eqdef:alpha-ustar}
 u^*(r)= - αr^{\f13} \quad\text{for}\quad r>0, \qquad \text{where } α:=\sqrt[3]{9n-15},
\end{equation}
forms a stationary solution of \eqref{eq:u} (for any $R>0$ both in $B_R\setminus\set{0}$, cf. Lemma~\ref{lem:stationary}, and -- in the weak sense -- in $B_R$, see Lemma~\ref{lem:stationary-weak}).

We will impose several conditions on the initial data $u_0$ (and refer to \eqref{u0cond} in Section~\ref{sec:stationary-and-initdata} below for details) that, besides radial symmetry, essentially require that $u_0$ lies below the stationary solution, but is 'close' to it in a suitable sense. Under these conditions we will be able to show the global existence of solutions that retain the singularity in their gradient throughout the evolution. 
 
\begin{thm}\label{thm:classical}
 Let $n\ge2$ and $0<R<\sqrt{\f38(3n-5)(2n-3)^3}$. Assume that $u_0$ satisfies \eqref{u0cond}. Then there is a function 
 \begin{equation}\label{eq:solutionregularity}
  u\in C(\Bbar_R\times[0,∞))\cap C^{2,1}((B_R\setminus\set0)\times(0,∞)) 
 \end{equation}
 which solves 
  \begin{equation}\label{ueq}
 \begin{cases}
  u_t=Δu+uu_r^3 \qquad &\text{in } (B_R\setminus\set0)\times(0,∞),\\
  u(0,t)=0, \quad u(R,t)=u^*(R) &\text{for all } t>0,\\
  u(\cdot,0)=u_0 &\text{in } \Bbar_R,
  \end{cases}
 \end{equation}
 in the classical sense. 
%Moreover, 
This solution is unique in the class of
%radially decreasing 
functions satisfying 
%\eqref{eq:solutionregularity}
$u_r\le0$ in $(B_R\setminus\set0)\times(0,∞)$ and \eqref{eq:solutionregularity}. Moreover, it holds that
 \begin{equation}\label{eq:gradientsingularity}
   \lim_{r\searrow 0} u_r(r,t)=-\infty \qquad \text{for every } t>0.
 \end{equation}
%  \begin{equation}
%   \limsup_{r\searrow 0} |r^{\f32-n-ν} (u^*(r)-u(r,t))|<\infty, 
%  \end{equation}
%  where $ν=\f16\sqrt{36n^2-96n+61}$; in particular, $\lim_{r\searrow 0} u_r(r,t)=-\infty$ for every $t>0$.
% it has a gradient singularity at $r=0$ for every $t\ge 0$. 
\end{thm}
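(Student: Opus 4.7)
I would construct $u$ as the limit of classical solutions on annular approximating domains, and then verify separately the regularity up to the origin, the gradient singularity, and uniqueness.

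\emph{Approximation, a priori bounds, global existence.} For $\varepsilon\in(0,R)$, pick smooth radial initial data $u_{0\varepsilon}$ on $\Bbar_R\setminus B_\varepsilon$ with $u_{0\varepsilon}(\varepsilon)=u^*(\varepsilon)$, $u_{0\varepsilon}(R)=u^*(R)$, $(u_{0\varepsilon})_r\le 0$, $u_{0\varepsilon}\le u^*$, satisfying the parabolic compatibility conditions at $t=0$ and converging to $u_0$ locally uniformly on $\Bbar_R\setminus\{0\}$ as $\varepsilon\searrow 0$. By \cite[Thm.~VI.4.2]{LSU}, each such problem admits a unique classical radial solution $u_\varepsilon$ of \eqref{ueq} on the annulus. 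The weak maximum principle applied to $u_\varepsilon-u^*$ (of linear parabolic form once $u_\varepsilon,u_{\varepsilon r}$ are bounded, using local Lipschitz character of $(u,p)\mapsto up^3$) yields $u_\varepsilon\le u^*$, while comparison with a constant provides a uniform lower bound. Differentiating the PDE in $r$ shows that $u_{\varepsilon r}$ satisfies a linear parabolic equation; initial and boundary nonpositivity then give $u_{\varepsilon r}\le 0$ throughout. These $L^\infty$-bounds, together with standard Schauder estimates, preclude blow-up and furnish uniform $C^{2+\theta,1+\theta/2}$-bounds on every compact $K\Subset (B_R\setminus\{0\})\times(0,\infty)$.

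\emph{Passage to the limit and regularity up to the origin.} Arzel\`a--Ascoli with a diagonal argument extracts a subsequence $u_{\varepsilon_k}$ converging in $C^{2,1}_{\mathrm{loc}}$ to a limit $u\in C^{2,1}((B_R\setminus\{0\})\times(0,\infty))$ solving \eqref{ueq} classically, with $u\le u^*$ and $u_r\le 0$. Continuity of $u$ on $\Bbar_R\times[0,\infty)$ is obtained by sandwiching: the upper envelope $u^*$ and a matching lower barrier built from $u_0$ via \eqref{u0cond} (of the same H\"older modulus $1/3$ at $r=0$) both vanish at $r=0$, giving a uniform modulus of continuity at the origin and forcing $u(0,t)=0$; continuity at $t=0$ and on $\partial B_R$ follows from standard barrier arguments.

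\emph{Gradient blow-up.} The bound $u(r,t)\le u^*(r)=-\alpha r^{1/3}$ together with $u(0,t)=0$ gives, via the mean value theorem on shrinking intervals, a sequence $s_k\searrow 0$ with $u_r(s_k,t)\to -\infty$, so $\liminf_{r\searrow 0}u_r(r,t)=-\infty$. Upgrading this to the full limit \eqref{eq:gradientsingularity} I would do via a scaling/compactness argument on the rescaled solutions $v_{r_0}(s,\tau):=r_0^{-1/3}u(r_0 s,\,t_0+r_0^2\tau)$ (the equation is invariant under this rescaling), which sit in a fixed $C^1$-compact family on $s\in[\tfrac12,2]$ thanks to the sandwich from the previous step; the limit profiles coincide with $u^*$ to leading order, yielding the asymptotic $u_r(r,t)\sim u^*_r(r)=-\tfrac{\alpha}{3}r^{-2/3}$ as $r\searrow 0$.

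\emph{Uniqueness, and the main obstacle.} If $u,v$ both satisfy the hypotheses of the theorem, then $w=u-v$ solves a linear parabolic equation whose coefficients are built from $v$ and $u_r^2+u_r v_r+v_r^2$; they are bounded on every annulus $(B_R\setminus \Bbar_\delta)\times(0,T)$ but blow up like $r^{-4/3}$ as $r\searrow 0$. The plan is to apply the maximum principle on such annuli and pass to the limit $\delta\searrow 0$, using $w(\cdot,0)\equiv 0$, $w(R,t)\equiv 0$, and $w(\delta,t)\to 0$ by continuity at the origin. I expect this to be the main technical hurdle: the singular coefficients at $r=0$ must be controlled as the inner boundary shrinks, and the sign restriction $u_r\le 0$ in the uniqueness class is precisely what supplies the favourable sign needed to close the comparison.
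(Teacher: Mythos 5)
Your skeleton (annular approximation $B_R\setminus B_\varepsilon$, comparison with $u^*$ from above, propagation of $u_{\varepsilon r}\le 0$, Arzel\`a--Ascoli, maximum-principle uniqueness) is the same as the paper's, but two essential ingredients are missing, and they are precisely where the hypotheses of the theorem get used.

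First, you have no lower barrier that vanishes at $r=0$. Comparison with a constant gives only $u_\varepsilon\ge u^*(R)$, which does not tend to $0$ as $r\to0$, and $u_0$ itself is not a subsolution, so ``a matching lower barrier built from $u_0$'' is an assertion, not a construction. The paper builds the subsolution $u^*-v$ with $v(r,t)=Ce^{-\lambda^2t}r^{n-\frac32}J_\nu(\lambda r)$ a solution of the linearization around $u^*$ (Lemmata~\ref{lem:linearized} and \ref{lem:subsolution}); verifying the subsolution inequality is exactly where the restriction $R<\sqrt{\frac38(3n-5)(2n-3)^3}$ enters, and \eqref{u0:closetozero} is what makes $u_0\ge u^*-v(\cdot,0)$ possible. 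Your proposal never uses the hypothesis on $R$ at all, which signals the gap. Without the pinch $u^*\ge u\ge u^*-v$ you cannot prove continuity at the origin (your upper bound $u\le u^*$ controls $u$ only from above there), and you lose the refined asymptotics $0\ge u^*-u\ge -cr^{n-\frac32+\nu}$ with $n-\frac32+\nu>1$, which is what the paper uses to obtain \eqref{eq:gradientsingularity}.

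Second, the claim that the $L^\infty$ bounds ``together with standard Schauder estimates preclude blow-up and furnish uniform $C^{2+\theta,1+\theta/2}$ bounds'' is unjustified and, for this class of equations, false in general: interior gradient blow-up of \emph{bounded} solutions of $u_t=u_{xx}+f(u)|u_x|^{m-1}u_x$ is exactly the phenomenon of \cite{AF}. An $L^\infty$ bound on $u_\varepsilon$ gives no control of the forcing term $u_\varepsilon u_{\varepsilon r}^3$, so Schauder theory cannot even start, neither for global existence on a fixed annulus nor for $\varepsilon$-uniform interior estimates. The paper's technical core is a Bernstein-type comparison for the weighted quantity $(r-\delta)_+^{p+3}u_{\varepsilon r}^p$ (Lemma~\ref{lem:bound:uer:Lploc-prelim}), which exploits the signs $u_\varepsilon\le0$, $u_{\varepsilon r}\le0$ and hypothesis \eqref{u0:derivativerestriction} to yield the $\varepsilon$-independent bound $|u_{\varepsilon r}|\le cr^{-\frac{p+3}{p}}$; only then do interior Schauder estimates apply. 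The same omission undermines your rescaling argument for upgrading $\liminf_{r\searrow0}u_r=-\infty$ to the full limit: $C^1$-compactness of the family $v_{r_0}$ requires interior gradient bounds of order $r^{-2/3}$, which is sharper than what the Bernstein estimate delivers, so that step would need additional work the paper avoids by reading the limit off the two-sided pinch. Your uniqueness argument, by contrast, is sound but more elaborate than necessary: since $c:=u_r^3\le0$ and the first-order coefficient multiplies $w_r$, which vanishes at an interior extremum, the comparison result \cite[Prop.~52.4]{QS} applies directly on $(B_R\setminus\{0\})\times(0,\infty)$ with $\{0\}$ treated as part of the boundary, and no shrinking annuli are needed.
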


For a more precise description of the singularity see
Remark~\ref{rem:form-of-the-singularity-remains}. Next we show that the function $u$ from Theorem~\ref{thm:classical} solves the equation from \eqref{eq:u} also in $B_R\times(0,∞)$ in a suitable weak sense.

\begin{thm}\label{thm:weak}
 In addition to the assumptions of Theorem~\ref{thm:classical} let $n\ge 3$. Then the solution $u$ from Theorem~\ref{thm:classical} is a weak solution of
 \begin{equation}\label{ueq-weak}
 \begin{cases} u_t = Δu + uu_r^3 & \text{in } B_R\times(0,∞),\\
  u=u^*(R) & \text{on } ∂B_R\times(0,∞),\\
  u(\cdot,0)=u_0 & \text{in } \Bbar_R.
 \end{cases}
 \end{equation}
By this we mean that
 \begin{equation}\label{eq:integrabilityur}
  uu_r^3 \in L^1_{loc}(B_R\times[0,∞)) \quad\text{ and }\quad \na u\in L^1_{loc}(B_R\times[0,∞)),
 \end{equation}
 and for every $φ\in C_c^\infty(B_R\times(0,∞))$ we have 
 \begin{equation}\label{eq:weak}
  -\intni\iBR φ_tu = -\intni\iBR \na u\cdot\na φ + \intni\iBR uu_r^3φ.
 \end{equation}
 We note that Theorem~\ref{thm:classical} guarantees that the initial and boundary conditions are satisfied.
\end{thm}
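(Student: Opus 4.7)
The strategy is to test the pointwise equation from Theorem~\ref{thm:classical} against a cut-off of $\varphi$ that vanishes near the singular axis $\{r=0\}$, then to remove the cut-off by passing to a limit.

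First I would verify \eqref{eq:integrabilityur}. The proof of Theorem~\ref{thm:classical}---in particular the sharper description of the gradient singularity alluded to in Remark~\ref{rem:form-of-the-singularity-remains}, essentially that $u$ mimics $u^*$ near $r=0$---supplies pointwise bounds of the form $|u(r,t)|\le Cr^{\f13}$ and $|u_r(r,t)|\le Cr^{-\f23}$ uniformly on $r\in(0,R]$, $t\in[0,T]$, for every $T>0$. In polar coordinates on $B_R$ one has $\int_0^R r^{-\f23}r^{n-1}\,dr<\infty$ and, using $n\ge 3$, $\int_0^R r^{-\f53}r^{n-1}\,dr<\infty$, so that $\nabla u$ and $uu_r^3$ both belong to $L^1_{loc}(B_R\times[0,\infty))$.

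Next, fix $\varphi\in C_c^\infty(B_R\times(0,\infty))$ and pick $T>0$ with $\supp\varphi\subset B_R\times(0,T)$. For small $\varepsilon>0$ let $\chi_\varepsilon\in C^\infty(\Bbar_R)$ be a radial cut-off with $\chi_\varepsilon\equiv 0$ on $\Bbar_\varepsilon$, $\chi_\varepsilon\equiv 1$ on $\Bbar_R\setminus B_{2\varepsilon}$, and $|\nabla\chi_\varepsilon|\le C/\varepsilon$. Then $\chi_\varepsilon\varphi\in C_c^\infty((B_R\setminus\set0)\times(0,T))$, and since $u$ solves \eqref{ueq} classically on this set, multiplying the equation by $\chi_\varepsilon\varphi$ and integrating by parts in $t$ and $x$ (there are no boundary terms) yields
\begin{equation*}
 -\intni\iBR u\chi_\varepsilon\varphi_t = -\intni\iBR \chi_\varepsilon\,\nabla u\cdot\nabla\varphi -\intni\iBR \varphi\,\nabla u\cdot\nabla\chi_\varepsilon +\intni\iBR u u_r^3\chi_\varepsilon\varphi.
\end{equation*}

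Finally I would send $\varepsilon\downarrow 0$. Dominated convergence, using the $L^\infty$-bound on $u$ and the integrability established above, sends the first, second and fourth terms to the corresponding integrals without $\chi_\varepsilon$, producing the left- and right-hand sides of \eqref{eq:weak}. What remains---and this is the main obstacle---is to show that the ``annulus term'' disappears in the limit. Since $\nabla\chi_\varepsilon$ is supported in $\{\varepsilon\le|x|\le 2\varepsilon\}$ and is bounded there by $C/\varepsilon$, the bound $|u_r|\le Cr^{-\f23}$ gives
\begin{equation*}
 \left|\intni\iBR \varphi\,\nabla u\cdot\nabla\chi_\varepsilon\right|\le C\|\varphi\|_\infty T\cdot\f1\varepsilon\int_\varepsilon^{2\varepsilon} r^{-\f23}r^{n-1}\,dr\le C'\varepsilon^{n-\f53}\longrightarrow 0,
\end{equation*}
which is precisely the step in which the quantitative rate of gradient blow-up coming from Theorem~\ref{thm:classical}, together with the dimensional hypothesis $n\ge 3$, is used decisively; any weakening of the estimate $|u_r|\lesssim r^{-\f23}$ near the origin would put this convergence in jeopardy.
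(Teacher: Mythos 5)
Your overall architecture is exactly the paper's: the paper proves \eqref{eq:weak} via Lemma~\ref{lem:isweaksol}, which is precisely your cut-off $\chi_\varepsilon\varphi$, integration by parts, dominated convergence for the terms carrying $\chi_\varepsilon$, and a separate argument that the annulus term $\int\varphi\,\nabla u\cdot\nabla\chi_\varepsilon$ vanishes; Lemma~\ref{lem:have-found-a-weak-solution} then supplies the quantitative input. So the decomposition and the identification of the ``main obstacle'' are both correct.

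The genuine gap is in the pointwise bound you feed into this machinery. You claim $|u_r(r,t)|\le Cr^{-\f23}$ near $r=0$ and attribute it to Remark~\ref{rem:form-of-the-singularity-remains}. That remark (and estimate \eqref{eq:uest}) controls only the \emph{function} $u^*-u$, namely $|u^*(r)-u(r,t)|\le Cr^{n-\f32+\nu}$; a bound on a difference of functions does not transfer to their derivatives, and nothing in the construction yields $|u_r|\lesssim r^{-\f23}$ for $t>0$ — condition \eqref{u0:derivativerestriction} imposes this only on $u_{0r}$. What the paper actually proves (Lemma~\ref{lem:estimate:uer:pointwise}, carried into \eqref{eq:urge} with $p=28$) is $|u_r|\le c(T)\,r^{-\f{p+3}{p}}$ for even $p\ge4$, and the exponent $\f{p+3}{p}>1$ never improves to $\f23$. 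A telltale sign that your bound is too strong: with $|u_r|\lesssim r^{-\f23}$ one gets $|uu_r^3|\lesssim r^{-\f53}$, which is locally integrable already for $n\ge2$, so the hypothesis $n\ge3$ would be superfluous. With the bound that is actually available, $|uu_r^3|\lesssim r^{\f13}r^{-\f{93}{28}}=r^{-\f{251}{84}}$ and the annulus term is $O(\varepsilon^{\,n-\f{59}{28}})$, and it is exactly here that $n\ge3$ is needed. The repair is purely mechanical — replace $r^{-\f23}$ by $r^{-\f{31}{28}}$ throughout and rerun your three computations — but as written the key estimate is unjustified.
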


Concerning the large-time behavior we establish the following:
\begin{thm}\label{thm:convergence}
 Under the assumptions of Theorem~\ref{thm:classical}, 
 \[
  u(\cdot,t)\to u^* \quad \text{as } t\to \infty. 
 \]
 This convergence is uniform in $B_R$ and occurs with an exponential rate.
\end{thm}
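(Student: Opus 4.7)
The plan is to sandwich $u$ between $u^*$ from above and a rescaled copy of $u^*$ from below that approaches $u^*$ exponentially.

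First I would establish $u\le u^*$ on $\Bbar_R\times[0,\infty)$. The hypotheses \eqref{u0cond} place $u_0$ below $u^*$, both functions share the boundary value $u^*(R)$ at $r=R$ and vanish at $r=0$, and $u^*$ is a stationary solution of \eqref{eq:u}. Comparison applied on the annulus $B_R\setminus B_\delta$ followed by $\delta\searrow 0$ (using the continuity of $u$ guaranteed by \eqref{eq:solutionregularity} to match values on $\partial B_\delta$) then yields the bound.

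For the matching lower barrier I would try the one-parameter ansatz
\[
  \uu(r,t):=\tau(t)\,u^*(r), \qquad \tau(t):=1+\eta e^{-\lambda t},
\]
with small $\eta>0$ and rate $\lambda>0$ still to be tuned. Using $\Delta u^*+u^*(u^*_r)^3=0$ one obtains
\[
  \uu_t-\Delta \uu-\uu\,\uu_r^3 \;=\; \dot\tau\, u^* + (\tau^4-\tau)\,\Delta u^*.
\]
Since $u^*<0$ and $\Delta u^*<0$ for $n\ge 2$, while $\dot\tau<0$ and $\tau>1$, the first summand is positive and the second negative. Inserting $u^*=-\alpha r^{1/3}$, $\Delta u^*=-\alpha(3n-5)/(9r^{5/3})$ and factoring $\tau^4-\tau=\tau(\tau-1)(\tau^2+\tau+1)$, the subsolution inequality $\uu_t-\Delta\uu-\uu\,\uu_r^3\le 0$ reduces to
\[
  \lambda r^2 \;\le\; \f{3n-5}{9}\,\tau(\tau^2+\tau+1) \qquad\text{for all }r\in(0,R],
\]
which, because $\tau\ge 1$, is satisfied whenever $\lambda\le(3n-5)/(3R^2)$; I would fix $\lambda$ at that value. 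The boundary conditions match automatically: $\uu(0,t)=0=u(0,t)$, and $\uu(R,t)=\tau(t)u^*(R)\le u^*(R)=u(R,t)$ since $u^*(R)<0$.

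It then remains to pick $\eta>0$ small enough that $(1+\eta)u^*\le u_0$ on $\Bbar_R$, i.e., $u^*-u_0\le\eta\alpha r^{1/3}$ pointwise; the 'closeness to $u^*$' part of \eqref{u0cond} should furnish such an $\eta$ once the precise quantitative form of those conditions is inserted. Comparison on $B_R\setminus B_\delta$ with $\delta\searrow 0$ then delivers $\uu\le u\le u^*$ and hence
\[
  0 \;\le\; u^*(r)-u(r,t) \;\le\; \eta e^{-\lambda t}(-u^*(r)) \;\le\; \eta\alpha R^{1/3}\,e^{-\lambda t}
\]
on $[0,R]\times[0,\infty)$, which is uniform exponential convergence with rate $(3n-5)/(3R^2)$. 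The main obstacle I anticipate is the comparison step near the singular origin: both barriers coincide with $u$ at $r=0$ only in the limit, so one must work on the annuli $B_R\setminus B_\delta$ and control the error on $\partial B_\delta$ using \eqref{eq:solutionregularity}. Checking that \eqref{u0cond} really delivers the required $\eta>0$ should be immediate from the explicit form of those conditions.
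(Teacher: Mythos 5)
Your route is genuinely different from the paper's. The paper's proof of Theorem~\ref{thm:convergence} is a two-line observation: the construction (Lemma~\ref{lem:ex}, estimate \eqref{eq:uest}) already yields $u^*\ge u\ge u^*-v$ on $[0,R]\times[0,\infty)$, and $v(r,t)=Ce^{-\lambda^2t}r^{n-3/2}J_\nu(\lambda r)$ is bounded by a constant times $e^{-\lambda^2 t}$ uniformly in $r\in[0,R]$, which is exactly the claim. You instead build a fresh lower barrier $\uu=\tau(t)u^*$ with $\tau=1+\eta e^{-\lambda t}$. That computation is correct: using $u^*(u^*_r)^3=-\Delta u^*$ one gets $\uu_t-\Delta\uu-\uu\uu_r^3=\dot\tau u^*+(\tau^4-\tau)\Delta u^*$, and your reduction to $\lambda r^2\le\frac{3n-5}{9}\tau(\tau^2+\tau+1)$, hence to $\lambda\le\frac{3n-5}{3R^2}$, is right. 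The initial ordering $(1+\eta)u^*\le u_0$ is indeed available: \eqref{u0:closetozero} gives $u^*-u_0=O(r^{n-\f32+\nu})$ near $0$ with $n-\f32+\nu>\f13$ for every $n\ge2$, and away from the origin $u^*-u_0$ is bounded while $-u^*=\alpha r^{1/3}$ is bounded below; note that no smallness of $\eta$ is required, only $\tau\ge1$. Your approach buys a self-contained, explicit rate; the paper's buys brevity, since the sandwich is a byproduct of the existence proof. (In particular, you need not re-derive $u\le u^*$ at all — it is part of \eqref{eq:uest}.)

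The one genuine gap is the comparison step you flag but do not execute. On $\partial B_\delta\times(0,\infty)$ neither $\uu\le u$ nor $u\le u^*$ is known a priori, and the coefficients of the linearized operator (which involve $u_r^2$, $\uu_r^2$) degenerate as $\delta\searrow0$, so "comparison on annuli followed by $\delta\searrow0$" is not yet a proof. A clean fix: for $\kappa>0$ compare $u$ with $\uu-\kappa$ on $(B_R\setminus B_\delta)\times(0,T)$. Since $\uu_r^3<0$, one has $(\uu-\kappa)_t-\Delta(\uu-\kappa)-(\uu-\kappa)(\uu-\kappa)_r^3\le\kappa\uu_r^3\le0$, so the shifted barrier is still a subsolution; by continuity of $u$ and $\uu$ at $r=0$ (uniform on $[0,T]$), one can choose $\delta=\delta(\kappa)$ so small that $\uu-\kappa<u$ on $\partial B_\delta\times[0,T]$; on the annulus all coefficients are bounded (by \eqref{eq:urge}) and \cite[Prop.~52.6]{QS} applies; letting $\kappa\searrow0$ gives $\uu\le u$. (The analogous shift $u^*+\kappa$, which is a supersolution because $-\kappa(u^*_r)^3>0$, would handle the upper bound if you insisted on re-proving it.) Alternatively one can compare $\tau u^*$ with $u_\varepsilon$ on $\Omega_\varepsilon$ and pass to the limit, but then the ordering on the inner boundary $\partial B_\varepsilon$, where $u_\varepsilon=u^*-v$, imposes the additional constraint that your decay rate $\lambda$ not exceed the rate $\lambda^2$ of $v$; the $\kappa$-shift argument avoids this.
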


An equation closely related to \eqref{eq:u} has been studied before in \cite{AF, FL}, see also \cite{QS}. It was shown in \cite{AF} that interior gradient blow-up may occur for solutions of the problem
\[
 \begin{cases} u_t = u_{xx} + f(u)|u_x|^{m-1}u_x \, , \qquad x\in (-1,1),\\
  u(\pm 1,t)=A_\pm \, ,
 \end{cases}
\]
where $m>2$ and $f(u)=u$, for example. A global continuation after the interior gradient blow-up has been constructed recently in \cite{FL} for $m=3$.

For various parabolic equations, solutions with a standing or moving singularity have been investigated by many authors. We shall give some references below. But in these references it is the solution itself that is unbounded while in the present work only the gradient stays unbounded.

For the equation
\[
u_t=\nabla\cdot(u^{m-1}\nabla u),
\]
solutions with standing singularities were considered in \cite{C,CV1,CV2, HK, QV, VW} for various ranges of $m>0$, $m\not=1$, and some results on moving singularities for the same equation can be found in \cite{FTY}.

Results on moving singularities for the heat equation were established in \cite{KT1, TY1} and for semilinear equations of the form
\[
u_t=\Delta u \pm u^p,\qquad p>1,
\]
in  \cite{KT2, KT3, SY1,SY2,SY3, TY2}.

Next we describe the plan of the paper.
Due to the gradient singularity that the solutions 
have at the spatial origin, the notion of classical solvability is restricted to $(B_R\setminus\set0)\times(0,∞)$. In Section~\ref{sec:weak-and-classical} we therefore begin by establishing a connection between classical solutions in $(B_R\setminus\set0)\times(0,∞)$ and weak solutions in $B_R\times(0,∞)$. 

Section~\ref{sec:stationary-and-initdata} will be concerned with the stationary solution $u^*$ mentioned in \eqref{eqdef:alpha-ustar} (and already use the result of Section~\ref{sec:weak-and-classical}). At the end of this section, we give a precise formulation of the conditions on $u_0$ that the theorems require (and that involve the stationary solution). 

We will construct the solutions between a super- and a subsolution. As a supersolution we will use $u^*$, finding the subsolution will be the goal of Section~\ref{sec:subsolution}. To this aim, we will find a solution $v$ to a (formal) linearization of \eqref{ueq} (see 
Lemma~\ref{lem:linearized}) and then ensure that $u^*-v$ is a subsolution (Lemma~\ref{lem:subsolution}). (This is also the source of the restriction on $R$ in the theorems.)

The actual construction of solutions takes place in Section~\ref{sec:existence}. We first restrict the spatial domain to $\Ome:=B_R\setminus B_{ε}$, for the choice of the boundary value on the new boundary $∂B_{ε}\times(0,∞)$ already relying on $u^*-v$ from Section~\ref{sec:subsolution}. In Section~\ref{subsec:approx-system}, we take care of the solvability of this problem. (Classical existence results become applicable after replacing the nonlinearity $u_r^3$ by $f(u_r)$, see Lemma~\ref{lem:ex:ue}, and until Lemma~\ref{lem:ue-without-f}, we will have derived sufficient estimates  allowing for removal of $f$, though still $ε$-dependent.) Section~\ref{subsec:apriori} will then be concerned with $ε$-independent estimates in preparation of a compactness argument leading to the existence of solutions. The key to this part will lie in a comparison principle applied to high powers of $u_r$ (see Lemma~\ref{lem:bound:uer:Lploc-prelim}). 
This is a modification of a classical technique which involves $|\nabla u|^2$ and originated in \cite{Bernstein}. 
Section~\ref{subsec:solve} will contain the passage to the limit $ε\searrow 0$ (Lemma~\ref{lem:ex}) and deal with \eqref{ueq} and \eqref{ueq-weak}. 

In Section~\ref{sec:theoremproofs}, finally, we give the proofs of the theorems. By this time, they will only consist in collecting the right lemmata previously proven, and will be accordingly short. 

\section{Relation between classical and weak solutions}\label{sec:weak-and-classical}
Of course, every classical solution of \eqref{ueq} is also a weak solution of \eqref{ueq} -- in $(B_R\setminus\set0)\times(0,∞)$, which means that the singularity appears on the boundary of the domain. 
In order to interpret classical solutions in $(B_R\setminus\set0)\times(0,∞)$ as weak solutions in $B_R\times(0,∞)$, we merely require suitable integrability properties of the derivative near $0$: 

\begin{lemma}\label{lem:isweaksol} Let $n\ge 1$ and $R>0$. 
 Assume that a radially symmetric function 
 \[
  u\in C(\Bbar_R\times[0,∞)) \cap C^{2,1}((B_R\setminus\set{0})\times(0,∞)) 
 \]
satisfies \eqref{eq:integrabilityur},
% with 
% \begin{equation}\label{eq:integrabilityur}
  %u_r\in L^3_{loc}(B_R\times (0,∞))
%  uu_r^3 \in L^1_{loc}(B_R\times[0,∞)) \text{ and } \na u\in L^1_{loc}(B_R\times[0,∞))
% \end{equation}
% solves 
\eqref{ueq}, and for every $T>0$ we have that
 \begin{equation}\label{eq:integralassumption}
  \lim_{ε\to 0} \f1{ε} \int_0^T \int_0^{ε} r^{n-1} |u_r(r,t)| dr dt= 0. 
 \end{equation}
 Then \eqref{eq:weak} holds for every $φ\in C_c^\infty(B_R\times(0,∞))$.
%$u$ is a weak solution of \eqref{ueq-weak}.
\end{lemma}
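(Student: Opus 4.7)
The plan is to localize the test function away from the singular origin by multiplication with a radial spatial cutoff, apply classical integration by parts where $u$ is smooth, and then pass to the limit as the cutoff is removed.

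I fix $φ\in C_c^\infty(B_R\times(0,\infty))$ and choose $T>0$ with $\supp φ\subset B_R\times(0,T)$. Picking a fixed profile $\chi\in C^\infty(ℝ)$ with $\chi\equiv 0$ on $(-\infty,1]$ and $\chi\equiv 1$ on $[2,\infty)$, I set $\chi_ε(x):=\chi(|x|/ε)$ for $ε>0$. Then $\chi_ε$ is smooth, vanishes on $B_ε$, equals $1$ outside $B_{2ε}$, lies in $[0,1]$, and satisfies $|\na\chi_ε|\le C/ε$ with support in the annulus $\set{ε\le|x|\le 2ε}$. The product $φ\chi_ε\in C_c^\infty((B_R\setminus\set{0})\times(0,\infty))$ is supported where $u$ is $C^{2,1}$. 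Multiplying the classical equation $u_t=Δu+uu_r^3$ by $φ\chi_ε$ and integrating by parts in $t$ and then in $x$ -- without any boundary contributions, since $φ$ vanishes near $t=0$ and for large $t$ while $φ\chi_ε$ is spatially compactly supported in $B_R\setminus\set{0}$ -- produces
\begin{equation*}
 -\intni\iBR uφ_t\chi_ε = -\intni\iBR \na u\cdot\na φ\,\chi_ε - \intni\iBR φ\,\na u\cdot\na\chi_ε + \intni\iBR uu_r^3 φ\chi_ε.
\end{equation*}

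Next, I pass to the limit $ε\searrow 0$. Since $u$ is bounded on $\Bbar_R\times[0,T]$ and, by \eqref{eq:integrabilityur}, both $\na u$ and $uu_r^3$ lie in $L^1_{loc}(B_R\times[0,\infty))$, while $0\le\chi_ε\le 1$ with $\chi_ε\to 1$ pointwise on $B_R\setminus\set{0}$, dominated convergence drives the three integrals involving $\chi_ε$ (but not $\na\chi_ε$) to the corresponding limits with $\chi_ε$ removed. It therefore only remains to show that the cross term
\begin{equation*}
 I_ε := \intni\iBR φ\,\na u\cdot\na\chi_ε
\end{equation*}
vanishes as $ε\searrow 0$.

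This will be the crux of the argument and the only place where \eqref{eq:integralassumption} is actually used. Exploiting the radial symmetry of $u$, $\na u = u_r e_r$ and $\na\chi_ε(x) = \f1ε \chi'(|x|/ε)\,e_r$; bounding $|\chi'|$ by a constant, $|φ|$ by $\norm[\infty]{φ}$, and passing to spherical coordinates, I obtain
\begin{equation*}
 |I_ε|\le C\norm[\infty]{φ}\cdot\f1ε\int_0^T\int_ε^{2ε} r^{n-1}|u_r(r,t)|\,dr\,dt,
\end{equation*}
and applying hypothesis \eqref{eq:integralassumption} with $2ε$ in place of $ε$ shows that the right-hand side tends to $0$. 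Combined with the dominated-convergence limits above, this yields \eqref{eq:weak}. The entire argument thus hinges on \eqref{eq:integralassumption}, which is tailored precisely to kill the error term arising from the localization.
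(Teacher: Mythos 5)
Your proof is correct and follows essentially the same route as the paper's: localize the test function with a radial cutoff supported away from the origin, obtain the identity from the classical equation, pass to the limit in the regular terms by dominated convergence using \eqref{eq:integrabilityur}, and kill the commutator term $\int φ\,\na u\cdot\na\chi_ε$ via \eqref{eq:integralassumption}. The only (cosmetic) difference is that your cutoff vanishes identically on $B_ε$ rather than merely at the origin, which makes the claim $φ\chi_ε\in C_c^\infty((B_R\setminus\set{0})\times(0,\infty))$ slightly cleaner, at the harmless cost of invoking \eqref{eq:integralassumption} at $2ε$ instead of $ε$.
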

\begin{proof}
 For every $ψ\in C_c^\infty((B_R\setminus\set0)\times(0,∞))$ we obtain
 \[
  -\intni\iBR ψ_tu = -\intni\iBR \na u\cdot\na ψ + \intni\iBR uu_r^3ψ,
 \]
 as $u$ solves the equation classically in $(B_R\setminus\set0)\times(0,∞)$. 
 
 We introduce a non-decreasing cut-off function $χ\in C^\infty(ℝ)$ with $0\le χ'\le 2$ and $χ(0)=0$, $χ\equiv 1$ on $[1,∞)$ and let $χ_{ε}(x):=χ(\f{|x|}{ε})$. 
 
 We let $φ\in C_c^\infty(B_R\times(0,∞))$ and note that for every positive $ε$, $ψ:=χ_{ε}φ$ belongs to $C_c^\infty((B_R\setminus\set0)\times(0,∞))$. 
 
 \begin{align*}
  -\intni\iBR φ_t u =& -\intni\iBR ψ_t u -\intni\iBR φ_t(1-χ_{ε})u \\
  =& -\intni\iBR \na u\cdot\na ψ + \intni\iBR uu_r^3ψ-\intni\iBR φ_t(1-χ_{ε})u\\
  =& -\intni\iBR χ_{ε}\na u\cdot\na φ - \intni\iBR φ\na u\cdot \na χ_{ε} \\&+\intni\iBR uu_r^3φχ_{ε} -\intni\iBR φ_t(1-χ_{ε})u
 \end{align*}
 for every $ε>0$. As $χ_{ε}\to 1$ a.e. in $\supp φ$ and by \eqref{eq:integrabilityur} and  boundedness of $u$ each of the functions $\na u\cdot \na φ$, $uu_r^3φ$, $φ_tu$ belongs to $L^1(\supp φ)$, 
 \begin{align*}
  -\intni&\iBR χ_{ε}\na u\cdot\na φ +\intni\iBR uu_r^3φχ_{ε} -\intni\iBR φ_t(1-χ_{ε})u\\
  &\to -\intni\iBR \na u\cdot\na φ +\intni\iBR uu_r^3φ \qquad \text{as } ε\to 0
 \end{align*}
 by Lebesgue's dominated convergence theorem. 
 
 Moreover, $|\na χ_{ε}(x)|=|χ_{εr}(r)|=\f1{ε}χ'(\f{r}{ε})\le \f{2}{ε}$ if $r=|x|<ε$ and $|\na χ_{ε}(x)|=0$ if $|x|>ε$. With $T>0$ such that $\supp φ\subset B_R\times(0,T)$, we have
 \begin{align*}
  \left\lvert \intni\!\iBR φ\na u\cdot \na χ_{ε}\right\rvert &\le \norm[∞]{φ} \int_0^T\iBR |\na u||\na χ_{ε}|%\\ 
%  &
\le \f{2}{ε} \norm[∞]{φ} \int_0^T\int_0^{ε} r^{n-1} |u_r(r,t)|  dr dt, 
 \end{align*}
 which vanishes as $ε\to 0$ according to \eqref{eq:integralassumption},
 and \eqref{eq:weak} follows.
% \begin{align*}
% -\intni\iBR φ_t u &= 
% -\intni\iBR \na u\cdot\na φ +\intni\iBR uu_r^3φ.\qedhere 
% \end{align*}
\end{proof}

\section{The stationary solution and conditions on the initial data}\label{sec:stationary-and-initdata}

In \eqref{eqdef:alpha-ustar}, we have introduced a stationary solution $u^*$ to \eqref{eq:u}. In this section we first prove that the function from \eqref{eqdef:alpha-ustar} actually has this property (see Lemma~\ref{lem:stationary} for the classical, Lemma~\ref{lem:stationary-weak} for the weak sense) and then formulate the conditions on the initial data, which involve relations with $u^*$ and whose formulation we therefore had postponed.  

%  With the ansatz 
%  \begin{equation}\label{def:ustar}
%   u^*(r)= αr^{β}
%  \end{equation}
%  we will attempt to find a stationary radial solution to \eqref{eq:u}, such that 
%  \[
%   0 = Δu^* + u^* + |u^*_r|^{p-1}u^*_r = r^{1-n} ( r^{n-1} u^*_r)_r + u^*|u^*_r|^{p-1}u^*_r.  
%  \]
%  Inserting \eqref{def:ustar} gives 
%  \begin{align*}
%   0 &= r^{1-n} (r^{n-1} αβ r^{β-1})_r + αr^{β}|αβr^{β-1}|^{p-1} αβr^{β-1} \\
%   &= αβ (n-2+β) r^{1-n+n-3+β} + α^2|α|^{p-1}β|β|^{p-1} r^{2β-1+(p-1)(β-1)}\\
%   &= αβ(n-2+β) r^{β-2} + α^2|α|^{p-1}β|β|^{p-1} r^{2β-1+pβ-p-β+1}\\
%   &= αβ(n-2+β) r^{β-2} + α^2|α|^{p-1}β|β|^{p-1} r^{β+pβ-p}
%  \end{align*}
%  so that 
%  \[
%   β-2= β+pβ-p, \qquad p-2=pβ, \qquad β=1-\f2p 
%  \]
%  and 
%  \[
%   αβ(n-2+β) = -α^2|α|^{p-1}β|β|^{p-1}, \qquad (n-2+β)=-α|α|^{p-1}|β|^{p-1}, \qquad α= - \sqrt[p]{\f{n-2+β}{β^{p-1}}}. 
%  \]

% \begin{definition}\label{def:alpha-and-ustar}
%  Given $n\in ℕ$, $n\ge 2$, we let $α:=\sqrt[3]{9n-15}$ and 
%   %\begin{equation*}
%   \(u^*(r)= - αr^{\f13}\) for $r>0$.%, \qquad r>0.
%  %\end{equation*}
% \end{definition}
 
 \begin{lemma}\label{lem:stationary}
  Let $n\ge 2$. Then the function $u^*$ from \eqref{eqdef:alpha-ustar} %Definition \ref{def:alpha-and-ustar} 
  solves 
  \[
   Δu^* + u^*(u_r^*)^3=0 \qquad \text{in } ℝ^n\setminus\set{0}. 
  \]
 \end{lemma}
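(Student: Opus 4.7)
The plan is a direct verification by computation, exploiting radial symmetry. For a radial function $u(r)$ on $ℝ^n\setminus\set0$, the Laplacian reduces to $Δu = u_{rr} + \f{n-1}{r}u_r$, so the claim becomes a pointwise identity in the single variable $r>0$.

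First I would differentiate $u^*(r)=-αr^{1/3}$ twice, obtaining
\[
 u_r^*(r) = -\tfrac{α}{3}\,r^{-2/3}, \qquad u_{rr}^*(r) = \tfrac{2α}{9}\,r^{-5/3}.
\]
Substituting into the radial Laplacian gives
\[
 Δu^*(r) = \tfrac{2α}{9}r^{-5/3} - \tfrac{(n-1)α}{3}r^{-5/3} = \tfrac{α(5-3n)}{9}\,r^{-5/3},
\]
while the nonlinear term evaluates to
\[
 u^*(r)\bigl(u_r^*(r)\bigr)^3 = (-αr^{1/3})\cdot\bigl(-\tfrac{α}{3}\bigr)^3 r^{-2} = \tfrac{α^4}{27}\,r^{-5/3}.
\]
Both terms thus carry the same power $r^{-5/3}$, and after multiplying by $27$ the equation $Δu^*+u^*(u_r^*)^3=0$ reduces to the algebraic condition $3α(5-3n)+α^4=0$, i.e.\ $α^3 = 9n-15$, which holds by the definition of $α$ in \eqref{eqdef:alpha-ustar}.

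There is no real obstacle here: everything is explicit, the only thing to be careful about is bookkeeping of signs (in particular in the cube $(-α/3)^3$ and in combining it with the factor $u^*=-αr^{1/3}$) and noting that $α$ is real, which requires $9n-15\ge0$, i.e.\ $n\ge 2$ — precisely the hypothesis of the lemma. The computation is valid on all of $ℝ^n\setminus\set0$ since $u^*$ is smooth there.
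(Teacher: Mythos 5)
Your computation is correct and is essentially the paper's own proof: both verify the identity by writing the radial Laplacian (the paper uses the divergence form $r^{1-n}(r^{n-1}u_r^*)_r$, you expand it as $u_{rr}^*+\frac{n-1}{r}u_r^*$, which is the same) and reducing everything to the algebraic relation $15-9n+\alpha^3=0$. The only (immaterial) slip is your closing remark that reality of $\alpha$ requires $9n-15\ge 0$ — a real cube root exists for any sign of the radicand, and the identity itself holds for any real $\alpha$ with $\alpha^3=9n-15$; the hypothesis $n\ge2$ matters elsewhere (it makes $\alpha>0$, hence $u^*$ decreasing), not for this verification.
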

 \begin{proof}
  We use radial symmetry and the explicit form of $u^*$ to write 
  \begin{align*}
   Δu^* + u^*(u_r^*)^3 &= r^{1-n}\kl{r^{n-1} \kl{-\f{α}3 r^{-\f23}}}_r + αr^{\f13} \kl{\f{α}3 r^{-\f23}}^3 \\
%   &= -\f{α}3 \kl{n-\f53} r^{1-n + n-\f83} + \f{α^4}{27} r^{\f13-2} \\
   &= \f{α}{27}r^{-\f53}\kl{15-9n + α^3} =0. \qedhere
  \end{align*}
 \end{proof}

\begin{lemma}\label{lem:stationary-weak}
 Let $n\ge 2$. Then for any $R>0$ the function $u^*$ defined 
%by Lemma~\ref{lem:stationary}
in \eqref{eqdef:alpha-ustar} is a weak solution of \eqref{ueq-weak}.
\end{lemma}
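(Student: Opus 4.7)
The plan is to invoke Lemma~\ref{lem:isweaksol}, applied to $u^*$ regarded as a function on $\Bbar_R\times[0,\infty)$ that is constant in time. Since $u^*$ is continuous on $\Bbar_R$ and smooth on $B_R\setminus\set0$, it satisfies the regularity hypotheses of that lemma. Because $u^*_t\equiv 0$, the PDE in \eqref{ueq} reduces to $\Delta u^*+u^*(u^*_r)^3=0$ on $B_R\setminus\set0$, which is exactly the conclusion of Lemma~\ref{lem:stationary}. The boundary value $u^*(R)$ on $\partial B_R$ and the initial value $u_0=u^*$ are satisfied trivially, so the only real work is checking the two integrability conditions \eqref{eq:integrabilityur} and \eqref{eq:integralassumption}.

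These reduce to straightforward polar integrals using the explicit formula $u^*_r(r)=-\f{α}{3}r^{-\f23}$, which gives $|u^*_r|\sim r^{-\f23}$ and $|u^*(u^*_r)^3|\sim r^{-\f53}$ near the origin. In $n$ space dimensions, a radial function $f(r)$ lies in $L^1_{loc}(B_R)$ provided $r^{n-1}f(r)\in L^1(0,R)$; here the relevant exponents are $n-1-\f23$ and $n-1-\f53$, both of which exceed $-1$ for $n\ge 2$, so \eqref{eq:integrabilityur} holds. For \eqref{eq:integralassumption}, the integrand is independent of $t$, so we compute
\[
 \f1{ε}\int_0^T\!\int_0^{ε} r^{n-1}|u^*_r(r)|\,dr\,dt = \f{Tα}{3(n-\f23)}\,ε^{\,n-\f53},
\]
which tends to $0$ as $ε\searrow 0$ precisely because $n\ge2>\f53$.

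Once these are verified, Lemma~\ref{lem:isweaksol} gives the weak identity \eqref{eq:weak}, which together with the trivial verification of the boundary and initial conditions shows that $u^*$ is a weak solution of \eqref{ueq-weak} in the sense specified in Theorem~\ref{thm:weak}. I do not anticipate any real obstacle here; the statement is essentially a bookkeeping exercise that confirms that the tight regularity threshold imposed on $u^*$ by its $r^{1/3}$-behaviour at the origin is compatible with the weak formulation once $n\ge 2$.
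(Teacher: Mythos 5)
Your proposal is correct and follows exactly the paper's route: apply Lemma~\ref{lem:isweaksol} to the time-independent function $u^*$, using Lemma~\ref{lem:stationary} for the equation and verifying \eqref{eq:integrabilityur} and \eqref{eq:integralassumption} by the explicit power computations $|u^*_r|\sim r^{-\f23}$, $|u^*(u^*_r)^3|\sim r^{-\f53}$, which work for $n\ge2$. The paper's own proof is precisely this two-line check.
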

\begin{proof}
 In order to apply Lemma~\ref{lem:isweaksol}, we only have to check integrability of 
 $u^*(r)(u^*_r)^3(r)=\f{α^4}{27}r^{\f13-2}$ and $u^*_r(r)=\f{α}3 r^{-\f23}$, which is satisfied, and 
\[  \lim_{ε\to 0} \f1{ε}\int_0^{ε}r^{n-1}|u^*_r(r)|dr=\lim_{ε\to 0} \f{α}{3n-2} ε^{n-\f53} =0.
  \qedhere
\]
\end{proof}

% \begin{definition}\label{def:nu}
%  For $n\in ℕ$, we let $ν:=ν(n):=\f16\sqrt{36n^2-96n+61}$.
% \end{definition}

Now and in the following, given any $n\in ℕ$ we let 
\begin{equation}\label{eqdef:nu}
 ν:=ν(n):=\f16\sqrt{36n^2-96n+61}.
\end{equation}

Having introduced $u^*$ and $ν$, we are now in a position to give the conditions on initial data that Theorems \ref{thm:classical}, \ref{thm:weak} and \ref{thm:convergence} have posed. 
\begin{subequations}\label{u0cond}
 \begin{align}
  &u_0\in C^2(\Bbar_R\setminus\set{0})\label{u0:C2},\\
  &u_0 \text{ is radially symmetric} \label{u0:radsym},\\
  &u^*\ge u_0, \label{u0:leq}\\
 &\limsup_{r\searrow 0} |r^{\f32-n-ν} (u^*(r)-u_0(r))|<\infty \label{u0:closetozero},\\
 &u_0(R)=u^*(R)\label{u0:aeussererrand},\\
 &\text{there is } C>0 \text{ such that } 0\ge u_{0r}(r)\ge -Cr^{-\f23}\quad \text{for every } r\in(0,R). \label{u0:derivativerestriction}
 \end{align}
\end{subequations}

\begin{remark}\label{rem:form-of-the-singularity-remains}
 The shape of the solution from Theorem~\ref{thm:classical} near the singularity of its gradient can be described more precisely than in \eqref{eq:gradientsingularity} by saying that 
 \eqref{u0:closetozero} continues to hold for $t>0$ in the sense that 
  \begin{equation*}
   \limsup_{r\searrow 0} |r^{\f32-n-ν} (u^*(r)-u(r,t))|<\infty \qquad \text{for all }t>0.
  \end{equation*}
  We will include a proof in the proof of Theorem~\ref{thm:classical} in Section~\ref{sec:theoremproofs}.
\end{remark}

 \section{Finding a subsolution}\label{sec:subsolution}

In order to construct a subsolution of \eqref{ueq} near $u^*$, we first find a solution of the (formal) linearization of \eqref{ueq} around $u^*$. 

\begin{lemma}\label{lem:linearized}
 Let $n\ge 2$, $C>0$, $ν$ as in \eqref{eqdef:nu}, $λ>0$. 
 Then the function 
 \begin{equation}\label{def:v}
  v(r,t) := C e^{-λ^2t} r^{n-\f32} J_{ν}(λr), \qquad r>0, t>0, 
 \end{equation}
 where $J_{ν}$ denotes the Bessel function of the first kind of order $ν$, solves 
 \[
  v_t = Δv + 3u^*u^{*2}_r v_r + u^{*3}_rv \qquad \text{in } (ℝ^n\setminus\set{0})\times(0,∞) 
 \]
 with $u^*$ taken from \eqref{eqdef:alpha-ustar}. %Definition \ref{def:alpha-and-ustar}. 
\end{lemma}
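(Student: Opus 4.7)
The plan is to verify the identity by direct computation, reducing the linearized equation to Bessel's equation via the ansatz $v = Ce^{-\lambda^2 t} r^{n-3/2} y(\lambda r)$.

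First, I would plug in $u^* = -\alpha r^{1/3}$ and $u_r^* = -\frac{\alpha}{3} r^{-2/3}$ (with $\alpha^3 = 9n-15$) to rewrite the coefficients of the equation in a concrete form. Computing
\[
3 u^*(u_r^*)^2 = -\frac{\alpha^3}{3} r^{-1} = -(3n-5) r^{-1}, \qquad (u_r^*)^3 = -\frac{\alpha^3}{27} r^{-2} = -\frac{3n-5}{9} r^{-2},
\]
and using $\Delta v = v_{rr} + \frac{n-1}{r} v_r$ for the radial Laplacian, the equation to be verified collapses to
\[
v_t = v_{rr} - \frac{2n-4}{r} v_r - \frac{3n-5}{9 r^2} v.
\]

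Since the $t$-dependence of the ansatz is a pure exponential, $v_t = -\lambda^2 v$, the claim is equivalent to the ODE
\[
w'' - \frac{2n-4}{r} w' + \Bigl(\lambda^2 - \frac{3n-5}{9 r^2}\Bigr) w = 0
\]
for $w(r) := r^{n-3/2} J_{\nu}(\lambda r)$. Next I would write $w(r) = r^{a} y(r)$ with $a := n - \tfrac32$ and $y(r) := J_\nu(\lambda r)$, and expand $w'$, $w''$. Dividing by $r^a$, the ODE becomes
\[
y'' + \bigl(2a - (2n-4)\bigr) \frac{y'}{r} + \lambda^2 y + \Bigl(a(a-1) - (2n-4)a - \frac{3n-5}{9}\Bigr) \frac{y}{r^2} = 0.
\]
Here the coefficient of $y'/r$ is $2(n-\tfrac32) - (2n-4) = 1$, exactly as required for Bessel's equation for $J_\nu(\lambda r)$, which reads
\[
y'' + \frac{1}{r} y' + \Bigl(\lambda^2 - \frac{\nu^2}{r^2}\Bigr) y = 0.
\]
It remains only to check that
\[
\nu^2 = (2n-4)a - a(a-1) + \frac{3n-5}{9}
\]
with $a = n-\tfrac32$. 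A short calculation yields $\nu^2 = n^2 - 3n + \tfrac{9}{4} + \tfrac{3n-5}{9} = \tfrac{36n^2 - 96n + 61}{36}$, which agrees with \eqref{eqdef:nu}.

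There is no genuine obstacle here: the only risk is a bookkeeping error in the substitution, so I would perform it slowly and cross-check each coefficient, in particular the vanishing of the $y'/r$ coefficient (which picks out $a = n-\tfrac32$) and the definition of $\nu^2$ (which picks out the prescribed value of $\nu$).
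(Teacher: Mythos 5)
Your proof is correct and takes essentially the same route as the paper's: both reduce the linearized equation to Bessel's equation by writing $v=Ce^{-\lambda^2 t}r^{n-3/2}J_{\nu}(\lambda r)$, with the exponent $n-\tfrac32$ normalizing the first-order coefficient to $1/r$ and the value of $\nu$ from \eqref{eqdef:nu} matching the zeroth-order term. All of your coefficient computations check out against the paper's.
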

\begin{proof}
 Let us recall that the function defined by $χ(r):=J_{ν}(λr)$, $r>0$, satisfies 
 \begin{equation}\label{bessel-basic}
  r^2χ''(r)+rχ'(r) + \left(λ^2r^2-ν^2\right)χ = 0 \qquad \text{for every } r>0. 
 \end{equation}
 We abbreviate $A:=4-2n$ and $B:=\f{3n-5}9$ and $δ:=n-\f32$ and note that 
 \begin{equation}\label{A-und-delta}
  2δ+A=1
 \end{equation}
 and 
 \begin{equation}\label{A-delta-B-und-nyquadrat}
  δ(δ-1)+Aδ-B =
% \left(n-\f32\right)\left(n-\f52\right)+(4-2n)\left(n-\f32\right)-\f{3n-5}9 =
-n^2+\f{8n}3-\f{61}{36} = -ν^2, 
 \end{equation}
 so that \eqref{bessel-basic}, \eqref{A-und-delta} and \eqref{A-delta-B-und-nyquadrat} for  
 $ψ(r):=r^{δ}χ(r)$, $r>0$, entail  
 \begin{align*}
  r^2 ψ''(r)&+A r ψ'(r) +λ^2r^2ψ(r) - B ψ(r)% \\
%  &
= r^2(r^{δ}χ)'' + Ar (r^{δ}χ)' - B r^{δ}χ \\
  &=r^2δ(δ-1)r^{δ-2}χ + 2r^2δr^{δ-1}χ'+ r^2r^{δ}χ'' 
  + Aδr^{δ} χ + Ar^{δ+1} χ' - Br^{δ}χ \\
  &= r^{δ}\left(r^2χ'' + (2δ+A) r χ' + (δ(δ-1)+Aδ -B )χ\right)\\
  &= r^{δ}\left(r^2χ'' + rχ' - ν^2 χ \right) %\\
%  &
= - r^{δ} r^2λ^2 χ %\\
%  &
= -r^2λ^2 ψ \qquad \text{for } r>0,
 \end{align*}
 and 
 \[
  v(r,t)=Ce^{-λ^2t}r^{n-\f32}J_{ν}(λr)=Ce^{-λ^2t}ψ(r), \qquad r>0,\, t>0,
 \]
 solves 
 \begin{align*}
  v_t&= Ce^{-λ^2t}(-λ^2ψ)%\\
%  &
= Ce^{-λ^2t}\left(ψ''+\f{A}rψ'-\f B{r^2}ψ\right)\\
  &= v_{rr} +\f{4-2n}r v_r + \f{5-3n}9 v%\\
%  &
= v_{rr} + \f{n-1}r v_r -\f{α^3}{3r} v_r - \f{α^3}{27r^2} v\\
  &= Δv + 3u^*u^{*2}_r v_r + u^{*3}_rv \qquad \text{in } (ℝ^n\setminus\set{0})\times(0,∞),   
 \end{align*}
 where we have used that $α=\sqrt[3]{9n-15}$ and $u^*(r)=-αr^{\f13}$.
% and $u^*_r(r)=-\f{α}3r^{-\f23}$ and $\f{α^3}3=3n-5$.
\end{proof}

\begin{definition}\label{def:x0x1}
 With $ν$ from \eqref{eqdef:nu}, we let $x_0>0$ and $x_1\in(0,x_0)$ be the first positive roots of the Bessel function $J_{ν}$ of the first kind and its derivative $J_{ν}'$, respectively. (As $ν>0$, $J_{ν}$ and $J_{ν}'$ are positive on $(0,x_0)$ and $(0,x_1)$, respectively.) 
\end{definition}

\begin{lemma}\label{lem:subsolution}
 Let $n\ge 2$, $C>0$, $λ>0$ and, with $x_1$ from Definition \ref{def:x0x1}, 
 \begin{equation}\label{def:R}
   0<R<\min\set{\f{x_1}{λ},\sqrt{\f38(3n-5)(2n-3)^3}}.%\kl{α\kl{n-\f32}}^{\f32}}. 
 \end{equation}
 With $v$ from \eqref{def:v}, the function 
 \[
  u := u^* - v
 \]
 then satisfies 
 \begin{equation}\label{unterloesung}
  u_t \le Δu + uu_r^3 \qquad \text{in } (B_R\setminus\set{0})\times(0,∞).
 \end{equation}
\end{lemma}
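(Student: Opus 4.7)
The plan is to set $u := u^* - v$, compute $u_t - \Delta u - u u_r^3$, and arrange it as the negative of a manifestly nonnegative remainder. The idea is that substituting the stationarity identity $\Delta u^* = -u^*(u^*_r)^3$ from Lemma~\ref{lem:stationary} together with $\Delta v - v_t = -3u^*(u^*_r)^2 v_r - (u^*_r)^3 v$ from Lemma~\ref{lem:linearized} is designed exactly to cancel the zeroth- and first-order Taylor terms of $(a,b)\mapsto ab^3$ at $(u^*,u^*_r)$. What is left after these cancellations is the higher-order Taylor remainder, which after expanding $(u^*_r-v_r)^3$ and collecting by powers of $v_r$ I expect to rewrite as
\[
 u_t - \Delta u - u u_r^3 \;=\; -\,v_r\,\bigl[\,u\, v_r\,(3 u^*_r - v_r) + 3 v (u^*_r)^2\,\bigr].
\]
The subsolution inequality \eqref{unterloesung} then reduces to the claim that both $v_r$ and the quantity in square brackets are nonnegative on $(0,R)\times(0,\infty)$.

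The remainder of the argument is a sign check. It is immediate that $u^* = -\alpha r^{1/3}<0$ and $u^*_r = -\tfrac{\alpha}{3} r^{-2/3}<0$, and also that $v>0$, because $J_\nu>0$ on $(0,x_0)$ and $\lambda R < x_1 < x_0$ by \eqref{def:R}; hence also $u = u^* - v < 0$. I expect the key step -- indeed the only non-bookkeeping one, and therefore the main obstacle -- to be establishing strict positivity of $v_r$. Differentiating \eqref{def:v} yields
\[
 v_r(r,t) \;=\; C e^{-\lambda^2 t}\bigl[(n - \tfrac{3}{2})\, r^{n-5/2}\, J_\nu(\lambda r) \;+\; \lambda\, r^{n-3/2} J_\nu'(\lambda r)\bigr],
\]
and each summand will be strictly positive on $(0,R)\times(0,\infty)$: the first because $n\ge 2$ makes the prefactor $n-\tfrac{3}{2}$ positive and $J_\nu > 0$ on $(0, x_0)$, the second because $J_\nu'>0$ on $(0,x_1)$ by Definition~\ref{def:x0x1}, together with $\lambda R<x_1$ from \eqref{def:R}. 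This is precisely the structural reason for choosing $v$ via the first-kind Bessel function in Lemma~\ref{lem:linearized}: it guarantees the right monotonicity of the perturbation.

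With these signs in hand, the bracket is nonnegative: $u v_r (3 u^*_r - v_r)$ is a product of three factors with signs $(-)(+)(-)$, hence $\ge 0$, and $3v(u^*_r)^2 \ge 0$ trivially. Consequently $u_t - \Delta u - u u_r^3 \le 0$ on $(B_R\setminus\{0\})\times (0,\infty)$, which is \eqref{unterloesung}. The other part of the bound \eqref{def:R}, namely $R < \sqrt{\f38(3n-5)(2n-3)^3}$, does not seem to enter this specific inequality and will presumably be used elsewhere.
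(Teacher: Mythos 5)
Your proof is correct, and it is worth comparing with the paper's, because the two diverge at the final sign check. Both arguments start identically: expand $-uu_r^3$ for $u=u^*-v$, cancel the zeroth- and first-order terms against the stationarity identity (Lemma~\ref{lem:stationary}) and the linearized equation (Lemma~\ref{lem:linearized}), and arrive at the quadratic-and-higher remainder
\begin{equation*}
 u_t-\Delta u-uu_r^3=-3u^*u^*_rv_r^2-3(u^*_r)^2vv_r+u^*v_r^3+3u^*_rvv_r^2-vv_r^3,
\end{equation*}
which is exactly what your factorization $-v_r\bigl[uv_r(3u^*_r-v_r)+3v(u^*_r)^2\bigr]$ unpacks to. From here your route is the shorter one: since $u^*<0$, $u^*_r<0$, $v>0$ and $v_r\ge0$ on $(0,R)\times(0,\infty)$ (the latter two from $\lambda R<x_1<x_0$ and $n-\f32>0$, as you argue), each of the five terms above is nonpositive and \eqref{unterloesung} follows with no further input. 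The paper instead rewrites the remainder in terms of $\psi(r)=r^{n-3/2}J_{\nu}(\lambda r)$ and there records the second term as $+\alpha C^2r^{-2/3}e^{-2\lambda^2t}\psi\psi'$ — i.e.\ as if it were $-3u^*_rvv_r$ rather than $-3(u^*_r)^2vv_r$, whose value is $-\f{\alpha^2}{3}C^2r^{-4/3}e^{-2\lambda^2t}\psi\psi'\le0$; the resulting (apparently spurious) positive term is then absorbed using the ratio estimate $\psi'/\psi\ge(n-\f32)r^{-1}$ together with the hypothesis $R<\sqrt{\f38(3n-5)(2n-3)^3}$, which is precisely why that bound appears in \eqref{def:R}. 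So your closing observation is accurate: in your computation the second restriction on $R$ is never used, and only $\lambda R<x_1$ matters for the subsolution property. Since that restriction is a hypothesis of the lemma, not invoking it does not weaken your proof; it shows the conclusion holds under less than the paper's own derivation requires.
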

\begin{proof}
For $u=u^*-v$, we have 
\begin{align*}
 - uu_r^3&=- (u^*-v)(u^*-v)_r^3 = -u^*(u^*-v)_r^3 + v(u^*-v)_r^3\\
 &= -u^*(u^*_r)^3 + 3 u^*(u^*_r)^2v_r - 3 u^*u^*_r v_r^2 +u^*v_r^3\\
 &\quad +(u_r^*)^3v-3(u_r^*)^2vv_r + 3u_r^*vv_r^2 -vv_r^3\qquad \text{in } (B_R\setminus\set0)\times(0,∞).
\end{align*}

As $u^*$ is a stationary solution according to Lemma~\ref{lem:stationary} and by Lemma~\ref{lem:linearized} $v$ solves the linearized equation, we conclude   
\[
 u^*_t-Δu^*-u^*(u^*_r)^3 = 0 \qquad \text{in } (B_R\setminus\set0)\times(0,∞)
\]
and 
\[
 -v_t+Δv +3u^*(u_r^*)^2v_r + (u_r^*)^3v = 0\qquad \text{in } (B_R\setminus\set0)\times(0,∞). 
\]
Accordingly, in $(B_R\setminus\set0)\times(0,∞)$ we obtain 
\begin{align*}
 u_t - Δu -uu_r^3 &= u^*_t -v_t -Δu^*+Δv  %\\
% &\quad 
- u^*(u^*_r)^3 +3 u^*(u^*_r)^2v_r - 3 u^*u^*_r v_r^2 +u^*v_r^3\\
 &\quad +(u_r^*)^3v-3(u_r^*)^2vv_r + 3u_r^*vv_r^2 -vv_r^3\\
  &= - 3 u^*u^*_r v_r^2 -3(u_r^*)^2vv_r +u^*v_r^3  + 3u_r^*vv_r^2 -vv_r^3\\
% \end{align*}
% 
% 
% 
% Because of \red{Gleichung, die $v$ löst}, 
% \begin{align*}
%  u_t - Δu -uu_r^3 &= u^*u^{*3}_r - 3u^*{u^*_r}^2v_r - {u^*_r}^3v -uu_r^3\\
%   &= - 3u^*u^*_rv_r^2 - 3u^*_rvv_r + u^*v_r^3 + 3u^*_rvv_r^2-vv_r^3\\
% %   
  &= - α^2 C^2r^{-\f13} e^{-2λ^2t} (ψ'(r))^2 %\\
%  &\quad
+ αC^2 r^{-\f23} e^{-2λ^2t}ψ(r)ψ'(r)\\
  &\quad- α C^3 r^{\f13} e^{-3λ^2t} (ψ'(r))^3%\\
%  &\quad
- α C^3r^{-\f23} e^{-3λ^2t} ψ(r)(ψ'(r))^2 \\
  &\quad- C^4 e^{-4λ^2t} ψ(r)(ψ'(r))^3.
\end{align*}
 Due to $rλ\le Rλ\le x_1= \min\set{x_0,x_1}$, we have that 
\begin{equation}\label{eq:psiprime}
 ψ'(r)=\kl{n-\f32}r^{n-\f52}J_{ν}(λr)+λr^{n-\f32}J_{ν}'(λr) \ge 0 \qquad \text{for all } r\in(0,R), 
\end{equation}
 hence 
 \begin{align}
  u_t - Δu -uu_r^3 &\le - α^2 C^2r^{-\f13} e^{-2λ^2t} (ψ'(r))^2 
  + αC^2 r^{-\f23} e^{-2λ^2t}ψ(r)ψ'(r)\nn\\
  &= αC^2e^{-2λ^2t} ψ'(r)r^{-\f23} \left(-αr^{\f13} ψ'(r) +ψ(r)\right) \qquad\text{ in } (0,R).\label{eq:unterloesung-bew}
 \end{align}
From \eqref{eq:psiprime} and $λR<x_1=\min\set {x_0,x_1}$, we can also infer 
\[
 \f{ψ'(r)}{ψ(r)}=r^{-1} \left[(n-\f32)+\f{rλJ_{ν}'(rλ)}{J_{ν}(rλ)}\right]\ge \kl{n-\f32}r^{-1}\qquad \text{for all } r\in(0,R),   
\]
so that 
\[
-αr^{\f13} ψ'(r) +ψ(r)\le \kl{-αr^{-\f23}\kl{n-\f32}+1}ψ(r) \le \kl{-αR^{-\f23}\kl{n-\f32}+1}ψ(r)\le 0
\]
for every $r\in(0,R)$, 
because $R^{-\f23}\ge \kl{\f38(3n-5)(2n-3)^3}^{-\f13}=\kl{α\kl{n-\f32}}^{-1}$, 
hence \eqref{eq:unterloesung-bew} turns into \eqref{unterloesung}.
\end{proof}

\section{Existence}\label{sec:existence}

\subsection{An approximate problem} \label{subsec:approx-system}
Construction of the solution to \eqref{eq:u} will be based on an appropriately modified problem on $(B_R\setminus B_{ε})\times (0,∞)$. In preparation of suitable initial data, we first turn our attention to $u_0$. 

\begin{lemma}\label{choice-of-R-lambda-C-v}
Let $n\ge2$, $0<R<\sqrt{\f38(3n-5)(2n-3)^3}$. Assume that $u_0$ satisfies \eqref{u0cond}.  
Let $λ>0$ be such that $λR< x_1$. 
There is $C>0$ so that $v$ from \eqref{def:v} satisfies 
\begin{equation}\label{u0geq}
 u_0\ge u^*-v(\cdot,0) \qquad \text{in } B_R.
\end{equation}
\end{lemma}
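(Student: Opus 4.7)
The plan is to recast \eqref{u0geq} as the pointwise inequality
\[
 w(r):=u^*(r)-u_0(r) \le Cr^{n-\f32}J_{ν}(λr) \qquad \text{for all } r\in[0,R],
\]
and then to choose $C$ as the supremum of a suitable ratio. By \eqref{u0:leq} one has $w\ge 0$, and the hypothesis $λR<x_1$, combined with Definition~\ref{def:x0x1} (so $x_1\le x_0$ and $J_{ν}>0$ on $(0,x_0)$), yields $J_{ν}(λr)>0$ for all $r\in(0,R]$; hence the right-hand side is strictly positive there. The problem therefore reduces to showing that
\[
 h(r):=\f{w(r)}{r^{n-\f32}J_{ν}(λr)}, \qquad r\in(0,R],
\]
is bounded, after which it suffices to set $C$ to any number $\ge\sup_{(0,R]}h$.

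To bound $h$, I would combine continuity on $(0,R]$ — which follows from \eqref{u0:C2} together with the strict positivity of the denominator — with an analysis at the two endpoints. At $r=R$, \eqref{u0:aeussererrand} gives $w(R)=0$ while the denominator is positive, so $h(R)=0$. As $r\searrow 0$, the classical small-argument expansion $J_{ν}(s)=\f{s^{ν}}{2^{ν}\Gamma(ν+1)}(1+o(1))$ shows that the denominator behaves like a positive constant times $r^{n-\f32+ν}$, while \eqref{u0:closetozero} says precisely that $w(r)=O(r^{n-\f32+ν})$. Hence $\limsup_{r\searrow 0}h(r)<\infty$, and these three ingredients together yield $\sup_{(0,R]}h<\infty$.

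The single point $r=0$ is then automatic: since $n\ge 2$ and $ν>0$ imply $n-\f32+ν>0$, both sides of the target inequality vanish there (the left side by \eqref{u0:closetozero}), so the bound extends to $[0,R]$, and via \eqref{u0:radsym} to all of $B_R$.

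I do not expect any real obstacle in executing this; the content of the lemma is a matching of scales. The exponent $\f32-n-ν$ in \eqref{u0:closetozero} is calibrated exactly to line up with the small-$r$ behaviour of the Bessel-type function $v(\cdot,0)$, and it is this matching that reduces the construction of $C$ to taking a supremum. Were the decay in \eqref{u0:closetozero} any slower than $r^{n-\f32+ν}$, $h$ would blow up at the origin and no $C$ could work, so the whole substance of the argument is already encoded in the hypothesis.
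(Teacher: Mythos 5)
Your proof is correct and follows essentially the same route as the paper: the paper likewise uses the small-argument asymptotics of $J_{ν}$ (giving $J_{ν}(λr)\ge c_1 r^{ν}$ on $[0,R]$ since $λR<x_0$) together with \eqref{u0:closetozero} to bound the ratio $(u^*-u_0)/(r^{n-\f32}J_{ν}(λr))$ and then takes $C$ at least this bound.
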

\begin{proof}
 Since $λR<x_0$, known asymptotics of the Bessel function \cite[p. 360, (9.1.7)]{abramowitz_stegun} yields the existence of $c_1=c_1(λ)>0$ %and $c_2=c_2(λ)>0$ 
 such that $c_1r^{ν}\le J_{ν}(λr)$ %\le c_2r^{ν}$
 for every $r\in[0,R]$. Therefore, \eqref{u0:closetozero} implies that for some $c_2>0$ we obtain
 \[
  \f{|u_0(r)-u^*(r)|}{r^{n-\f32}J_{ν}(λr)} \le c_2 \qquad \text{for every } r\in(0,R). 
 \]
 If we let $C\ge c_2$, this coincides with \eqref{u0geq}.
\end{proof}
% 
% \red{Alternativ: ``$u^*-v$ ist Unterlösung'' nicht ins Lemma~aufnehmen, stattdessen Remark: The condition $λR<x_1$, which is stricter than what the proof of Lemma~\ref{choice-of-R-lambda-C-v} uses, ensures applicability of Lemma~\ref{lem:subsolution}.} %klären. Ggf in der nächsten Def: ... as in Lemma~\ref{choice-of-R-lambda-C-v} and Lemma~\ref{lem:subsolution}. 

\begin{definition}\label{def:R-lambda-C-v}
Now and in all of the following, we let $n$, $C$, $R$, $λ$, $v$ be as in Lemma~\ref{lem:subsolution} and Lemma~\ref{choice-of-R-lambda-C-v}. 
\end{definition}

\begin{definition}\label{def:u0eps}
Let $ε>0$ and $u_0$ satisfy \eqref{u0cond}.
We denote $\Ome:=B_R\setminus B_{ε}$.
Moreover, let $u_{0ε}\in C^2(\Omebar)$ be radially symmetric and such that  
\begin{subequations}
\begin{align}\label{defu0eps:value-at-eps}
 &u_{0ε}(ε)=u^*(ε)-v(ε,0),\\
 &u_{0r} \le u_{0εr}\le 0, \label{defu0eps:derivative}\\
 &u^*\ge u_{0ε}\ge u^*-v(\cdot,0),\label{defu0eps:estimate}\\
 &u_{0ε} = u_0 \quad \text{on the set } \set{r\in (ε,R] \mid u_0(r)<u^*(ε)-v(ε,0)-ε}\label{defu0eps:equalityonset}.
\end{align}
\end{subequations}
\end{definition}

\begin{remark}
 For \eqref{defu0eps:estimate}, we rely on Lemma~\ref{choice-of-R-lambda-C-v}; that the other conditions can be fulfilled is more immediate from \eqref{u0cond}. 
\end{remark}

\begin{remark}\label{rem:u0eps-and-u0-coincide}
As $u^*(ε)-v(ε,0)-ε\to 0$ as $ε\to 0$, \eqref{defu0eps:equalityonset} ensures that for every $δ>0$ there is $ε_0>0$ such that for all $ε\in(0,ε_0)$ we have $u_{0ε}=u_0$ on $B_R\setminus B_{δ}$. 
\end{remark}

\begin{definition}%[Definition of $c_{ε}^*$]
\label{def:ceps}
%Given $ε>0$, we introduce a number $c^*_{ε}>0$ in the following way: 
Let $ε>0$. First let us note that 
\begin{equation*}%\label{def:cv}
c_v:=-e^{λ^2t}v(ε,t)
\end{equation*}
is positive and constant with respect to $t$ according to \eqref{def:v}. \\
We choose $c^*_{ε} >1$ large enough so as to satisfy
\begin{subequations}
\begin{align}
 c^*_{ε} &> \sup_{[ε,R]} |u^*_r|, \label{defceps:ustarr}\\
 c^*_{ε} &> \sup_{[ε,R]} |(u^*-v(\cdot,0))_r|, \label{defceps:ustarminusvr}\\
 c^*_{ε} &> \sup_{[ε,R]} |u_{0εr}|, \label{defceps:uner}\\
 c_v &+\f{n-1}{ε}c^*_{ε} + u^*(ε)(c^*_{ε})^3\le 0.  \label{defceps:uerinnen}
\end{align}
\end{subequations}
\end{definition}

\begin{definition}\label{def:feps}
We let $f_{ε}\in C_c^\infty(ℝ)$ be such that $f_{ε}(s)=s^3$ for every $s\in[-c^*_{ε},c^*_{ε}]$ (with $c^*_{ε}$ from Definition \ref{def:ceps}) and $f_{ε}\le 0$ on $(-∞,0)$. 
\end{definition}

With $u_{0ε}$ and $f_{ε}$ as in Definitions \ref{def:u0eps} and \ref{def:feps}, we now consider 
\begin{equation}\label{eq:ue:f}
 \begin{cases}
  \uet = Δ\ue + \ue f_{ε}(\uer) & \text{in } \Ome \times(0,∞),\\
  \ue(\cdot,t)|_{∂B_ε} = \kl{u^*-v(\cdot,t)}|_{∂B_ε} & \text{for all } t>0,\\
  \ue(\cdot,t)|_{∂B_R}=u_0(R)=u^*(R) & \text{for all } t>0,\\
  \ue(\cdot,0)=u_{0ε} & \text{in } \Omebar.
 \end{cases}
\end{equation}

By classical theory for parabolic PDEs, this problem has a solution. 

\begin{lemma}\label{lem:ex:ue}
 Let $ε>0$. Then \eqref{eq:ue:f} has a unique solution 
 \[
  \ue \in C^{β,\f{β}2}(\Omebar\times[0,\infty))\cap C^{2+β,1+\f{β}2}(\Ome\times(0,\infty))\quad \text{ with }\quad 
\na\ue \in L^\infty_{loc}(\Omebar\times[0,∞))
 \]
 for some $β\in(0,1)$. This solution is radially symmetric. 
\end{lemma}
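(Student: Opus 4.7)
The plan is to exploit the radial symmetry of all data and the benign nature of the cut-off nonlinearity $f_{ε}$ in order to reduce the problem to a scalar semilinear parabolic equation on the bounded interval $(ε,R)$. Writing $u=u(r,t)$, the PDE in \eqref{eq:ue:f} becomes
\[
 u_t = u_{rr} + \f{n-1}{r} u_r + u\, f_{ε}(u_r),
\]
which is uniformly parabolic on $[ε,R]$ (because $r$ is bounded away from $0$), with $C^\infty$ nonlinearity that is globally Lipschitz in $(u,u_r)$ on bounded sets. The prescribed boundary values $u^*(ε)-v(ε,t)$ and $u^*(R)$ depend smoothly on $t$, and zero-order compatibility with $u_{0ε}$ at $t=0$ is built into Definition~\ref{def:u0eps} via \eqref{defu0eps:value-at-eps} and \eqref{u0:aeussererrand}.

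For local classical solvability I would appeal to a standard existence theorem for semilinear uniformly parabolic equations of the type quoted in the introduction as \cite[Thm.~VI.4.2]{LSU}. The a priori bounds needed to extend the solution to $(0,∞)$ are cheap here: because $f_{ε}(0)=0$ (since $f_{ε}(s)=s^3$ near $0$), at any interior space-time extremum one has $u_r=0$ and hence $u\,f_{ε}(u_r)=0$, so the maximum principle yields a uniform $L^\infty$-bound on $\ue$ in terms of the parabolic boundary data; boundedness of $u\,f_{ε}(u_r)$ then follows from boundedness of $f_{ε}$, and standard parabolic Schauder estimates allow continuation to all times. The Hölder regularity up to the parabolic boundary and the locally uniform $L^\infty$-bound on $\na\ue$ can be extracted in parallel by combining interior Schauder estimates with a boundary argument based on the $C^2$-regularity of $u_{0ε}$ and the smoothness in $t$ of the boundary data.

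For uniqueness in the declared class, the difference $w=u_1-u_2$ of two solutions satisfies a linear parabolic equation whose coefficients are bounded (using boundedness of $f_{ε}$, $f_{ε}'$ and of each $u_i$ together with its gradient in $L^\infty_{loc}$); since $w$ vanishes on the parabolic boundary, $w\equiv 0$ by the maximum principle. Radial symmetry of $\ue$ is then automatic: the radial solution constructed on $(ε,R)$ above, viewed as a function on $\Ome$, solves the full $n$-dimensional problem, and by uniqueness it coincides with every solution. The principal technical point I expect to require care is the handling of the two space-time corners $\{ε,R\}\times\{0\}$, where only zero-order compatibility is available -- which is precisely why the lemma states $C^{β,β/2}$ (rather than $C^{2+β,1+β/2}$) regularity up to $t=0$, and only $L^\infty_{loc}$-regularity of $\na\ue$ up to the parabolic boundary.
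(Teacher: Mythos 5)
Your proposal is correct and rests on the same structural facts as the paper's proof --- boundedness of $f_{\varepsilon}$, the $C^2$-regularity of $u_{0\varepsilon}$ with zero-order compatibility at the corners, and radial symmetry deduced from uniqueness --- but it assembles the conclusion differently. The paper's proof is a one-line citation: boundedness of $f_{\varepsilon}$ together with the regularity requirements on $u_{0\varepsilon}$ makes \cite[Thm.~V.6.2]{LSU}, a global existence and uniqueness theorem for quasilinear parabolic problems, directly applicable, and radial symmetry then follows from the uniqueness assertion exactly as you argue. You instead build the result by hand via local existence, a priori bounds and continuation. That route works: your maximum-principle bound is legitimate since $f_{\varepsilon}(0)=0$ (and is not even strictly needed, because $|u f_{\varepsilon}(u_r)|\le \|f_{\varepsilon}\|_{\infty}|u|$ already precludes finite-time blow-up of $\|u\|_{\infty}$), your linearization of the difference for uniqueness uses only the global boundedness of $f_{\varepsilon}$ and $f_{\varepsilon}'$ and the boundedness of the solutions on compact time intervals, and your observation that only $C^{\beta,\beta/2}$-regularity can be expected at the corners is exactly right. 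The trade-off is that you become responsible for several standard-but-nontrivial steps that the single citation covers wholesale --- above all the gradient bound in $L^{\infty}_{loc}$ up to the parabolic boundary under only zero-order corner compatibility, for which in practice one would again have to invoke a boundary gradient estimate of Lieberman/LSU type (the paper itself uses \cite[Thm.~4.6]{Lieberman_AnnScPisa_86} for the analogous purpose one lemma later).
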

\begin{proof}
%  \gray{With $a_i(x,t,z,p)=p_i$ and $a(x,t,z,p)=zf_{ε}(p\cdot e_r)$, $ψ(x,0):=u_{0ε}(x)$, $ψ(x,t)=u_0(R)$ for $|x|=R$, $ψ(x,t)=u^*-v(\cdot,t)|_{∂B_ε}$, $|x|=ε$, \eqref{eq:ue} has the form 
%  \begin{align*}
%   \uet - \f{d}{dx_i} a_i(x,t,\ue,\na\ue) + a(x,t,\ue,\na\ue) = 0\\
%   \ue=ψ \text{on } \Om\times\set0\cup ∂\Om\times(0,∞).
%  \end{align*}
%  We have $\f{∂a_i}{∂p_j}ξ_iξ_j |_{p=0} = δ^{ij} ξ_iξ_j = |ξ|^2\ge 0$ and $ua(x,t,u,0)=0\ge -b_1u^2-b_2$, so that conditions (2.29) and (2.30) of \cite[Ch.1]{LSU} are satisfied. Moreover, $a_i$ and $a$ are continuous, $a_i$ differentiable wrt. $x,u,p$ and \cite[(V.6.9)]{LSU} is satisfied on every $Q_T\times[-M,M]\timesℝ$: 
%  \[
%   νξ^2 \le \f{∂a_i}{∂p_j} ξ_iξ_j \le μξ^2 \qquad \text{with }ν=μ=1
%  \]
%  and 
%  \[
%   \sum_i (|a_i|+|\f{∂a_i}{∂u}|(1+|p|) + \sum_{i,j} |\f{∂a_i}{∂x_j}| +|a| \le μ(1+|p|)^2, 
%  \]
%  i.e. 
%  \[
%   |p|(1+|p|) + M\sup|f_{ε}| \le μ(1+|p|)^2 \qquad \text{ if } μ=\max\set{\f32,\f12+M\sup|f_{ε}|}. 
%  \]
%  Additionally, $a$ and $a_i$ are continuously differentiable wrt. $x,u,p$ and Hölder-continuous wrt. $t$; $ψ\in C^{2,1}(∂\bar{Q}_T)$, $Dψ(\cdot,0)$ bounded, $ψ\in C^{γ,\f{γ}2}$; $\Om$ is of class $C^2$.\\
%  Finally, $a$ is locally Lipschitz.
%  }
 Boundedness of $f_{ε}$ and the regularity requirements on $u_{0ε}$ ensure applicability of \cite[Thm. V.6.2]{LSU}, which yields existence and uniqueness of the solution. Radial symmetry of $u_{0ε}$ together with the uniqueness assertion implies radial symmetry of the solution. 
\end{proof}

Later (in Lemmata \ref{lem:gradientbound} and \ref{lem:bound:uer:Lploc-prelim}) we want to invoke   comparison principles for the derivative. In order to make them applicable, we need slightly more regularity than provided by Lemma~\ref{lem:ex:ue}.

\begin{lemma}\label{lem:ue:higherregularity}
Let $ε>0$. Then there is $β\in(0,1)$ such that
 \[
  \ue\in C^{3+β,\f{3+β}2}(\Ome\times(0,\infty))
 \qquad\text{and} \qquad
  \na \ue \in C^{β,\f{β}2}(\Omebar \times[0,\infty)).
 \]
\end{lemma}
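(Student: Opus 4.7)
The plan is a routine parabolic bootstrap, treating the interior and the boundary assertions separately. For the interior regularity, I would regard
\[
\uet - Δ\ue = \ue f_{ε}(\uer)
\]
as a linear heat equation whose right-hand side $F := \ue f_{ε}(\uer)$ is a smooth function of $\ue$ and $\uer$. Lemma~\ref{lem:ex:ue} furnishes $\ue,\uer\in C^{1+β,(1+β)/2}$ on every parabolic cylinder compactly contained in $\Ome\times(0,∞)$; since $f_{ε}\in C_c^\infty(ℝ)$, the chain rule places $F$ in the same Hölder class there. Interior parabolic Schauder estimates applied on an exhausting sequence of such cylinders then upgrade $\ue$ to $C^{3+β,(3+β)/2}(\Ome\times(0,∞))$, giving the first assertion.

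For the second assertion I would appeal to boundary Schauder theory applied to the linear equation obtained by treating $\ue f_{ε}(\uer)\in L^\infty_{loc}(\Omebar\times[0,∞))$ (legitimate thanks to the gradient bound supplied by Lemma~\ref{lem:ex:ue}) as a known forcing. The lateral data are $C^\infty$ in $t$: at $r=ε$ they equal $u^*(ε)-v(ε,t)=-αε^{\f13}+c_v e^{-λ^2t}$, and at $r=R$ they equal the constant $u^*(R)$. The initial datum $u_{0ε}\in C^2(\Omebar)$ satisfies zeroth-order compatibility at both space–time corners, at $(ε,0)$ directly by \eqref{defu0eps:value-at-eps} and at $(R,0)$ via \eqref{u0:aeussererrand} together with Remark~\ref{rem:u0eps-and-u0-coincide}, which makes $u_{0ε}=u_0$ in a neighbourhood of $r=R$ for $ε$ small. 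Parabolic $L^p$-theory (LSU, Ch. IV) then produces $\ue\in W^{2,1}_p$ up to the parabolic boundary for any $p<∞$, and Morrey embedding yields $\na\ue\in C^{α,α/2}(\Omebar\times[0,∞))$ for some $α\in(0,1)$.

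With $\na\ue$ now Hölder continuous up to the parabolic boundary, the forcing $\ue f_{ε}(\uer)$ itself belongs to $C^{α,α/2}$, so a second application of boundary Schauder estimates promotes $\ue$ to $C^{2+β,1+β/2}(\Omebar\times[0,∞))$ for some $β\in(0,1)$, which in particular yields the claimed $\na\ue\in C^{β,β/2}(\Omebar\times[0,∞))$. The main obstacle I expect is purely bookkeeping around the compatibility conditions: only zeroth-order compatibility is available from \eqref{u0cond} and Definition~\ref{def:u0eps}, so no higher regularity up to the parabolic boundary can be hoped for, but the level asserted in the lemma is precisely what this zeroth-order compatibility permits and it is what later Schauder-based comparison arguments for $\uer$ (Lemmata~\ref{lem:gradientbound} and \ref{lem:bound:uer:Lploc-prelim}) will actually require.
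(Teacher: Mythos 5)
Your interior argument is the same as the paper's: Lemma~\ref{lem:ex:ue} gives $\ue\in C^{2+β,1+\f{β}2}$ locally in $\Ome\times(0,∞)$, hence $\ue f_{ε}(\uer)\in C^{1+β,\f{1+β}2}$ there, and linear Schauder theory (the paper localizes with a cutoff and cites \cite[Thm.~IV.5.2]{LSU}) yields $C^{3+β,\f{3+β}2}$. For the boundary assertion the paper takes a shortcut you don't: it simply invokes Lieberman's global gradient Hölder estimate for quasilinear equations \cite[Thm.~4.6]{Lieberman_AnnScPisa_86}, which delivers $\na\ue\in C^{β,\f{β}2}(\Omebar\times[0,∞))$ directly under the zeroth-order compatibility available here. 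Your alternative — treat $\ue f_{ε}(\uer)\in L^\infty$ as a known forcing, apply $W^{2,1}_p$ theory up to the parabolic boundary for large $p$, and use the anisotropic Morrey embedding $W^{2,1}_p\hookrightarrow C^{1+α,\f{1+α}2}$ — is a legitimate and arguably more self-contained route to the same conclusion; it only needs the zeroth-order compatibility and $u_{0ε}\in C^2(\Omebar)\subset W^{2-2/p}_p$.

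One step of your proposal is wrong, though fortunately superfluous. The concluding ``second application of boundary Schauder estimates'' cannot promote $\ue$ to $C^{2+β,1+\f{β}2}(\Omebar\times[0,∞))$: Schauder regularity of that order up to the corners $\partial\Ome\times\set{0}$ requires the \emph{first-order} compatibility condition, e.g.\ $-v_t(ε,0)=Δ u_{0ε}(ε)+u_{0ε}(ε)f_{ε}(u_{0ε r}(ε))$ at $r=ε$, which is neither assumed nor generically true; indeed you yourself note that only zeroth-order compatibility is available, so $\uet$ and $Δ\ue$ will in general be discontinuous at the corners. Simply delete that step: the $W^{2,1}_p$/embedding stage already produces exactly the asserted $\na\ue\in C^{β,\f{β}2}(\Omebar\times[0,∞))$, which is all that Lemmata~\ref{lem:gradientbound} and \ref{lem:bound:uer:Lploc-prelim} require.
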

\begin{proof}
Letting $η\in C_c^\infty(\Ome\times(0,\infty))$ we observe that $ηu$ solves $(ηu)_t=Δ(ηu)+g$, where $g=-η_tu-2\naη\cdot\na u-uΔη + ηuf_{ε}(u_r)$ and that, thanks to $u\in C^{2+β,1+\f{β}2}(\supp η)$ by Lemma~\ref{lem:ex:ue}, $g\in C^{1+β,\f{1+β}2}(\Omebar\times(0,\infty))$. \cite[Thm. IV.5.2]{LSU} therefore implies $ηu\in C^{3+β,\f{3+β}2}(\Omebar\times[0,\infty))$. 
% \gray{
% $Pu=div(A(X,u,Du))+B(X,u,Du)-u_t$, $A(X,z,p)=p$, $a^{ij}=\f{∂A^i}{∂p_j}=δ^{ij}$, $B(X,z,p)=zf_{ε}(p\cdot e_r)$. $\Om\in C^{γ}$, $ψ\in H_{γ}\supset C^{2,1}$, $u\in C(\bar Q_T)\cap C^{2,1}(Q_T)$; $\norm[∞]{u}+\norm[∞]{Du}\le K$, $δ^{ij}ξ_iξ_j\ge λ_K|ξ|^2$; 
% $|A_{ν}|+ (d^*)^0(|A_z|+|A_x|+|B|)\le μ_K$ in $Q_T$ (as $|B|\le \norm[∞]{u}\sup |f_{ε}|$). 
% }
Hölder continuity of $\na \ue$ up to $t=0$ and to the spatial boundary follows from \cite[Thm. 4.6]{Lieberman_AnnScPisa_86}.
\end{proof}

As a first estimate of $\ue$, the following Lemma~not only affirms boundedness of $\ue$, but also forms the foundation of estimate \eqref{eq:uest} for $u$.

\begin{lemma}\label{lem:ue-bd}
 Let $ε>0$. Then 
 \begin{equation}\label{comparison:ue}
  u^* \ge \ue \ge u^*-v \qquad \text{ in  } \Ome\times (0,∞).
 \end{equation}
\end{lemma}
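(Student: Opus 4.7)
\medskip

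\textbf{Proof plan.} I will prove the two inequalities separately by applying a parabolic comparison principle to the difference functions $u^*-\ue$ and $\ue-(u^*-v)$. The key point in each case is that on $\Ome$ the gradient of the reference function lies in the range $[-c^*_\varepsilon,c^*_\varepsilon]$ where $f_\varepsilon$ coincides with cubing, so that results from the original problem (Lemma~\ref{lem:stationary} and Lemma~\ref{lem:subsolution}) transfer to the regularized equation in \eqref{eq:ue:f}.

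For the upper bound $\ue\le u^*$, I first observe that $|u^*_r|\le c^*_\varepsilon$ on $[\varepsilon,R]$ by \eqref{defceps:ustarr}, so $f_\varepsilon(u^*_r)=(u^*_r)^3$ there. Combined with Lemma~\ref{lem:stationary}, this makes $u^*$ a classical solution of the equation in \eqref{eq:ue:f} on $\Ome\times(0,\infty)$. On the parabolic boundary, $u^*(R)=\ue(R,t)$ and $u^*(\varepsilon)\ge u^*(\varepsilon)-v(\varepsilon,t)=\ue(\varepsilon,t)$ since $v(\varepsilon,t)=Ce^{-\lambda^2 t}\varepsilon^{n-3/2}J_{\nu}(\lambda\varepsilon)>0$ (as $\lambda\varepsilon<\lambda R<x_1\le x_0$), and $u^*\ge u_{0\varepsilon}$ by \eqref{defu0eps:estimate}. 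Writing $w:=\ue-u^*$ and subtracting the two equations, a mean value argument turns $w$ into a (classical) subsolution of a linear parabolic equation
\[
 w_t-\Delta w-b(r,t)w_r-c(r,t)w=0
\]
with coefficients that are bounded on $\Ome\times[0,T]$ for each $T>0$ thanks to Lemma~\ref{lem:ue:higherregularity}, boundedness of $u^*$ on $[\varepsilon,R]$, and smoothness of $f_\varepsilon$. The standard parabolic maximum principle then gives $w\le 0$.

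For the lower bound $\ue\ge u^*-v$, the decisive preliminary step is the gradient estimate
\[
 |(u^*-v)_r(r,t)|\le |(u^*-v(\cdot,0))_r(r)|\le c^*_\varepsilon \qquad \text{for } r\in[\varepsilon,R],\,t>0.
\]
This follows because $u^*_r\le 0$ while $v_r(r,t)=e^{-\lambda^2 t}v_r(r,0)\ge 0$ by \eqref{eq:psiprime}, so $(u^*-v)_r(r,t)=u^*_r(r)-e^{-\lambda^2 t}v_r(r,0)$ is nonpositive and $|(u^*-v)_r(r,t)|\le|u^*_r(r)|+v_r(r,0)=|(u^*-v(\cdot,0))_r(r)|$, which is bounded by $c^*_\varepsilon$ via \eqref{defceps:ustarminusvr}. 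Hence $f_\varepsilon((u^*-v)_r)=((u^*-v)_r)^3$, and the subsolution property \eqref{unterloesung} from Lemma~\ref{lem:subsolution} upgrades to a subsolution of the regularized equation. On the parabolic boundary, $(u^*-v)(\varepsilon,t)=\ue(\varepsilon,t)$, $(u^*-v)(R,t)=u^*(R)-v(R,t)\le u^*(R)=\ue(R,t)$ (again since $v(R,t)>0$), and $u^*-v(\cdot,0)\le u_{0\varepsilon}$ by \eqref{defu0eps:estimate}. A comparison argument analogous to the one above, applied to $(u^*-v)-\ue$, yields $(u^*-v)-\ue\le 0$.

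The main (and only non-routine) obstacle is verifying the gradient bound $|(u^*-v)_r|\le c^*_\varepsilon$, which relies crucially on the sign information $v_r\ge 0$ established in \eqref{eq:psiprime} from the restriction $\lambda R<x_1$. Once both reference functions are known to be classical sub/supersolutions of the regularized equation, the maximum principle applies directly on each slab $\Ome\times[0,T]$, and letting $T\to\infty$ completes the proof.
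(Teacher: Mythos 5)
Your proof is correct and follows essentially the same route as the paper's: both verify via \eqref{defceps:ustarr} and \eqref{defceps:ustarminusvr} that $f_{ε}$ acts as cubing on the gradients of $u^*$ and $u^*-v$, invoke Lemma~\ref{lem:stationary} and Lemma~\ref{lem:subsolution} for the super-/subsolution properties of the regularized equation, check the parabolic boundary data, and conclude by comparison (\cite[Prop.~52.6]{QS}). Your explicit check that the gradient bound for $u^*-v$ persists for $t>0$ (via $v_r(r,t)=e^{-λ^2t}v_r(r,0)\ge0$) is a detail the paper leaves implicit.
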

\begin{proof}
 Due to \eqref{defceps:ustarr} and \eqref{defceps:ustarminusvr}, each of the functions $w\in\set{u^*,\ue,u^*-v}$ satisfies $f_{ε}(w_r)=w_r^3$ in $\Ome\times(0,∞)$ and hence for $w\in\set{u^*,\ue}$ we have 
 \[
  w_t = Δw + f_{ε}(w_r)w, 
 \]
 whereas $w_t\le Δw+f_{ε}(w_r)w$ for $w=u^*-v$ (cf. Lemma~\ref{lem:subsolution}). 
 By construction, $u^*(R)=\ue(R,t)\ge u^*(R)-v(R,t)$ and $u^*(ε)\ge\ue(ε,t)=u^*(ε)-v(ε,t)$ for all $t>0$, and $u^*\ge u_{0ε}\ge u^*-v(\cdot,0)$, so that the comparison principle (\cite[Prop.~52.6]{QS}) implies \eqref{comparison:ue}. 
\end{proof}

We prepare for an estimate of $\uer$ by comparison, first 
providing some information on its value on the spatial boundary, beginning with the outer part $∂B_R\times(0,∞)$. 

\begin{lemma}\label{lem:uer:bounds:aussen}
 For every $ε>0$ and $t>0$ we have 
 \[
  u^*_r(R)\le \uer(R,t) \le 0.
 \]
\end{lemma}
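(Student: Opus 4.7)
The approach is to split the claim into its two inequalities and treat them independently: the lower bound follows essentially for free from Lemma~\ref{lem:ue-bd}, while the upper bound requires a parabolic comparison argument with the constant function $u^*(R)$ serving as a lower barrier.

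For the lower bound $u^*_r(R)\le\uer(R,t)$, Lemma~\ref{lem:ue-bd} gives $u^*-\ue\ge 0$ on $\Ome\times(0,\infty)$, while the outer boundary condition in \eqref{eq:ue:f} forces equality at $r=R$. Hence for each fixed $t>0$ the nonnegative function $r\mapsto(u^*-\ue)(r,t)$ on $[ε,R]$ attains its minimum at the right endpoint, so its radial derivative there is $\le 0$, which reads $u^*_r(R)-\uer(R,t)\le 0$.

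For the upper bound $\uer(R,t)\le 0$, the plan is to first establish $\ue\ge u^*(R)$ throughout $\Omebar\times[0,\infty)$; together with $\ue(R,t)=u^*(R)$ this places the spatial minimum of $\ue(\cdot,t)$ at $r=R$ and yields $\uer(R,t)\le 0$. The constant $u^*(R)$ itself solves $w_t=\Delta w+w f_{ε}(w_r)$ (because $f_{ε}(0)=0$), so the comparison principle \cite[Prop.~52.6]{QS} applies once $\ue\ge u^*(R)$ is verified on the parabolic boundary of $\Ome\times(0,\infty)$. On $\{R\}\times(0,\infty)$ equality holds by construction; at $t=0$ the function $u_{0ε}$ is non-increasing by \eqref{defu0eps:derivative} and satisfies $u_{0ε}(R)=u^*(R)$ (the implicit corner compatibility forced by continuity of $\ue$ from Lemma~\ref{lem:ex:ue} and the boundary condition in \eqref{eq:ue:f}), hence $u_{0ε}\ge u^*(R)$; and on $\{ε\}\times(0,\infty)$ the explicit form \eqref{def:v} shows that $v(ε,\cdot)$ is decreasing in $t$, so $\ue(ε,t)=u^*(ε)-v(ε,t)\ge u^*(ε)-v(ε,0)=u_{0ε}(ε)\ge u^*(R)$, using \eqref{defu0eps:value-at-eps} and the initial-time case.

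The step deserving most attention is the verification of $\ue\ge u^*(R)$ on the inner boundary: a priori $u^*(ε)-v(ε,t)$ could drop below $u^*(R)$ when $ε$ is close to $R$, and it is only the combination of the time-monotonicity of $v(ε,\cdot)$ with the careful construction of $u_{0ε}$ in Definition~\ref{def:u0eps} (specifically \eqref{defu0eps:value-at-eps} and \eqref{defu0eps:derivative}, together with the implicit corner compatibility $u_{0ε}(R)=u^*(R)$) that rules this out.
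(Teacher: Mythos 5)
Your proof is correct and follows essentially the same route as the paper: the lower bound from the fact that $u^*-\ue\ge 0$ attains its minimum value $0$ at $r=R$ (Lemma~\ref{lem:ue-bd} plus the boundary condition), and the upper bound by comparing $\ue$ with the constant subsolution $u^*(R)$ via \cite[Prop.~52.6]{QS}. You in fact supply more detail than the paper, which merely asserts the parabolic-boundary inequalities $\uu(ε,t)\le\ue(ε,t)$ and $\uu(r,0)\le\ue(r,0)$; your verification via the time-monotonicity of $v(ε,\cdot)$, \eqref{defu0eps:value-at-eps}, \eqref{defu0eps:derivative} and the corner compatibility $u_{0ε}(R)=u^*(R)$ is exactly the right way to justify them.
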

\begin{proof}
 Since $u^*(R)=\ue(R,t)$ for all $t>0$, \eqref{comparison:ue} shows that $u^*_r(R)\le \uer(R,t)$ for all $t>0$. Moreover, $\uu(r,t):=u^*(R)$, $(r,t)\in[ε,R]\times[0,∞)$, satisfies $\uu_t\le Δ\uu +f(\uu_r)\uu$ in $(ε,R)\times(0,∞)$ and $\uu(R,t)\le \ue(R,t)$, $\uu(ε,t)\le \ue(ε,t)$ for all $t>0$ and $\uu(r,0)\le \ue(r,0)$ for all $r\in(ε,R)$. By the comparison principle \cite[Prop.~52.6]{QS} therefore $\ue(r,t)\ge u^*(R)=\ue(R,t)$ for every $(r,t)\in(0,R)\times(0,∞)$ so that $\uer(R,t)\le 0$ for every $t>0$.
\end{proof}

On the inner boundary, we first establish the sign of $\uer$. 
\begin{lemma}\label{lem:uer:nonpositive:innen}
 For every $ε>0$ and $t>0$ it holds that
 \[
  \uer(ε,t)\le 0.
 \]
\end{lemma}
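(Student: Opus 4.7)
My plan is to compare $\ue$ with the $r$-independent barrier
\[
\bar u(r,t) := u^*(ε) - v(ε,t),
\]
which agrees with $\ue$ on the inner boundary $∂B_ε$. Since $v(r,t) = Ce^{-λ^2 t}r^{n-\f32}J_{ν}(λr)$ satisfies $v_t = -λ^2 v$, and $\bar u$ is constant in $r$, one obtains $\bar u_t = λ^2 v(ε,t) > 0$, $Δ\bar u = 0$ and $\bar u f_{ε}(\bar u_r) = \bar u f_{ε}(0) = 0$. Hence $\bar u$ is a strict supersolution of the PDE in \eqref{eq:ue:f}.

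Next I would verify that $\ue \le \bar u$ on the parabolic boundary of $\Ome\times(0,∞)$. Equality holds on $∂B_ε$ by \eqref{defu0eps:value-at-eps} and the inner boundary condition. At $t=0$, the monotonicity $u_{0εr}\le 0$ from \eqref{defu0eps:derivative} together with $u_{0ε}(ε) = u^*(ε)-v(ε,0)$ yields $u_{0ε}(r) \le u^*(ε)-v(ε,0)=\bar u(r,0)$ for every $r\in[ε,R]$. At $r=R$, using that $t\mapsto v(ε,t)>0$ is decreasing,
\[
\bar u(R,t)\ge u^*(ε)-v(ε,0) = u_{0ε}(ε) \ge u_{0ε}(R) = u^*(R) = \ue(R,t),
\]
where $u_{0ε}(R)=u^*(R)$ holds (for $ε$ small enough) via \eqref{defu0eps:equalityonset} and Remark~\ref{rem:u0eps-and-u0-coincide}. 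The comparison principle \cite[Prop.~52.6]{QS}, applicable thanks to smoothness and boundedness of $f_{ε}$ together with the regularity from Lemma~\ref{lem:ue:higherregularity}, then gives $\ue \le \bar u$ throughout $\Omebar\times[0,∞)$.

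Since $\ue(ε,t)=\bar u(ε,t)$ and $\bar u_r(ε,t)=0$, the right-hand difference quotient of $\ue-\bar u$ at $r=ε$ yields $\uer(ε,t)\le 0$, as claimed. The main obstacle in this plan is the choice of barrier: natural candidates such as $u^*$ or $u^*-v$ do not touch $\ue$ at $r=ε$ in a Hopf-usable way, whereas the $r$-constant function $\bar u$ gives exactly the required boundary contact and a vanishing spatial derivative there, making the one-sided comparison immediate.
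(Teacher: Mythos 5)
Your proof is correct and follows essentially the same route as the paper: the same $r$-constant barrier $\ubar(r,t)=u^*(\varepsilon)-v(\varepsilon,t)$, the same comparison via \cite[Prop.~52.6]{QS} on the parabolic boundary of $\Ome\times(0,\infty)$, and the same one-sided difference quotient at $r=\varepsilon$; the paper merely phrases the comparison through the operator $\calM[\phi]=\phi_t-\Delta\phi-\ue\uer^2\phi_r$ instead of checking directly that $\ubar$ is a supersolution of the equation with $f_{\varepsilon}$. The only minor difference is at $r=R$, where the paper asserts $u^*(R)\le\ubar(R,t)$ outright while you route through $u_{0\varepsilon}(R)=u_0(R)$ for small $\varepsilon$ -- a restriction the paper implicitly shares and which is harmless in the regime of interest.
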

\begin{proof}
 With $\calM[ϕ]:= ϕ_t - Δϕ - \ue\uer^2ϕ_r$ and $\ubar(x,t):=u^*(ε)-v(ε,t)$ for $(x,t)\in\Omebar\times[0,∞)$, we have 
 \[
  \calM[\ue]=0, \quad \calM[\ubar]=\ubar_t=-v_t(ε,t)\ge 0 \qquad \text{in } \Ome\times(0,∞), 
 \]
 which together with $\ue(ε,t)=\ubar(ε,t)$, $\ue(R,t)=u^*(R)\le \ubar(R,t)$ for all $t>0$ and the consequence $u_{0ε}(r)\le u_{0ε}(ε) = \ubar(r,0)$ of \eqref{defu0eps:derivative} and \eqref{defu0eps:value-at-eps} enables us to invoke \cite[Prop.~52.6]{QS} once more to conclude $\ue(r,t)\le \ubar(r,t)=\ue(ε,t)$ for all $r\in(ε,R)$ and $t>0$, which implies $\uer(ε,t)\le 0$ for all $t>0$.
\end{proof}

The upper estimates in Lemma~\ref{lem:uer:bounds:aussen} and Lemma~\ref{lem:uer:nonpositive:innen} determine the sign of $\uer$ throughout $\Ome\times[0,∞)$.
\begin{lemma}\label{lem:uer:nonpositive}
 Let $ε>0$. Then 
 \[
  \uer\le 0 \qquad \text{in } \Ome\times[0,∞).
 \]
\end{lemma}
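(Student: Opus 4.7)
The plan is to set $w:=u_{\varepsilon r}$, derive the linear parabolic equation it satisfies, and then apply a maximum principle on $\Omega_\varepsilon\times(0,\infty)$ using the sign information on the parabolic boundary that has just been collected.

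By Lemma~\ref{lem:ue:higherregularity} the function $w$ belongs to $C^{2+\beta,1+\beta/2}(\Omega_\varepsilon\times(0,\infty))\cap C^{\beta,\beta/2}(\overline{\Omega_\varepsilon}\times[0,\infty))$, so we may differentiate the equation $u_{\varepsilon t}=u_{\varepsilon rr}+\tfrac{n-1}{r}u_{\varepsilon r}+u_\varepsilon f_\varepsilon(u_{\varepsilon r})$ (written in radial form) with respect to $r$. Carrying out the differentiation yields
\[
 w_t \;=\; w_{rr} + \Bigl(\tfrac{n-1}{r} + u_\varepsilon f_\varepsilon'(w)\Bigr)\, w_r + \Bigl(f_\varepsilon(w) - \tfrac{n-1}{r^2}\Bigr)\, w \qquad \text{in } \Omega_\varepsilon\times(0,\infty),
\]
which, viewing $w$ as the unknown and $u_\varepsilon$, $f_\varepsilon'(u_{\varepsilon r})$, $f_\varepsilon(u_{\varepsilon r})$ as given Hölder continuous and bounded coefficients on $\overline{\Omega_\varepsilon}\times[0,T]$ for any $T>0$, is a linear parabolic equation with bounded coefficients (the singular factor $\tfrac{1}{r^2}$ is harmless because $r\ge\varepsilon>0$).

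Next I would read off the signs on the parabolic boundary: Lemma~\ref{lem:uer:bounds:aussen} gives $w(R,t)\le 0$, Lemma~\ref{lem:uer:nonpositive:innen} gives $w(\varepsilon,t)\le 0$, and \eqref{defu0eps:derivative} gives $w(\cdot,0)=u_{0\varepsilon r}\le 0$ on $[\varepsilon,R]$. Thus $w\le 0$ on the parabolic boundary of $\Omega_\varepsilon\times(0,T)$ for every $T>0$.

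The last step is to invoke the parabolic maximum principle for linear equations with bounded coefficients: applied to the auxiliary function $\tilde w(r,t):=e^{-Kt}w(r,t)$ with $K>0$ chosen larger than the $L^\infty$-bound of the zero-order coefficient $f_\varepsilon(w)-\tfrac{n-1}{r^2}$ on $\overline{\Omega_\varepsilon}\times[0,T]$, the modified equation has a nonnegative zero-order term, so the standard weak maximum principle (e.g.\ \cite[Prop.~52.4]{QS}) yields $\tilde w\le 0$, hence $w\le 0$, on $\overline{\Omega_\varepsilon}\times[0,T]$. Since $T>0$ was arbitrary, the claim follows. The only mildly delicate point, and what I would treat as the main technical obstacle, is justifying the use of the comparison principle up to the (spatial and temporal) boundary; this is exactly what Lemma~\ref{lem:ue:higherregularity} was prepared for, ensuring the Hölder continuity of $w$ and of all coefficients up to the boundary.
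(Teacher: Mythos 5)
Your proposal is correct and follows essentially the same route as the paper: differentiate the equation in $r$, collect the sign of $w=u_{\varepsilon r}$ on the parabolic boundary from Lemmata \ref{lem:uer:bounds:aussen} and \ref{lem:uer:nonpositive:innen} and from \eqref{defu0eps:derivative}, and conclude by a maximum principle for the resulting linear equation with bounded coefficients (bounded since $r\ge\varepsilon$ and by Lemma~\ref{lem:ue-bd}). The only cosmetic difference is that the paper invokes \cite[Prop.~52.8]{QS} directly (which already tolerates a bounded zero-order coefficient, using the regularity from Lemma~\ref{lem:ue:higherregularity}), whereas you reduce to \cite[Prop.~52.4]{QS} via the standard $e^{-Kt}$ rescaling; both are fine.
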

\begin{proof}
 As $w:=\uer$ belongs to $C(\Omebar\times(0,∞))\cap C([0,∞);L^2(\Omebar))$ with $w_t,\nabla w,D^2w\in L^2_{loc}(\Ome\times(0,∞))$ by Lemma~\ref{lem:ue:higherregularity}, solves $w_t=Δw + f_{ε}(\uer)w + \ue f'_{ε}(\uer) w_r$ in $\Ome\times(0,∞)$, $f(\uer)$ is bounded in $\Ome\times(0,∞)$ due to boundedness of $f_{ε}$, and so is $\ue f'_{ε}(\uer)$ because of Lemma~\ref{lem:ue-bd}, we can apply \cite[Prop.~52.8]{QS} to conclude nonpositivity of $w$ from nonpositivity of $w$ on $\Ome\times\set{0}$ (see \eqref{defu0eps:derivative}) and on $∂\Ome\times(0,∞)$ as guaranteed by Lemmata \ref{lem:uer:bounds:aussen} and \ref{lem:uer:nonpositive:innen}.
\end{proof}

We now turn our attention to the counterpart of Lemma~\ref{lem:uer:nonpositive:innen}.

\begin{lemma}\label{lem:uer:bounded:innen}
 For every $ε>0$ we obtain
 \[
  \uer(ε,t)\ge -c^*_{ε} 
 \]
 for every $t\in(0,∞)$, where $c^*_{ε}$ is as in Definition \ref{def:ceps}. 
\end{lemma}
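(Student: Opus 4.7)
The plan is to construct a linear-in-$r$ comparison function that touches $\ue$ at $r=ε$ with slope exactly $-c^*_ε$, and to apply a Hopf-type boundary argument. Guided by the inner Dirichlet datum $\ue(ε,t)=u^*(ε)-v(ε,t)$, the natural candidate is
\[
\uu(r,t) := u^*(ε) - v(ε,t) - c^*_ε(r-ε), \qquad (r,t)\in\Omebar\times[0,∞),
\]
which satisfies $\uu(ε,t)=\ue(ε,t)$ and $\uu_r\equiv -c^*_ε$.

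I would first verify that $\uu\le\ue$ on the parabolic boundary of $\Ome\times(0,∞)$: at $r=ε$ this is an equality; at $r=R$, concavity of $r\mapsto r^{1/3}$ together with \eqref{defceps:ustarr} yields $α(R^{1/3}-ε^{1/3})\le\frac{α}{3}ε^{-2/3}(R-ε)\le c^*_ε(R-ε)$, which, combined with $v(ε,t)\ge 0$, gives $\uu(R,t)\le u^*(R)$; at $t=0$, conditions \eqref{defceps:uner} and \eqref{defu0eps:value-at-eps} yield $u_{0ε}(r)\ge u_{0ε}(ε)-c^*_ε(r-ε)=\uu(r,0)$. For the PDE, a direct computation employing $f_ε(-c^*_ε)=-(c^*_ε)^3$ and $v_t(ε,t)=-λ^2 v(ε,t)$ gives
\[
\uu_t - Δ\uu - \uu f_ε(\uu_r) = λ^2 v(ε,t) + \frac{n-1}{r}c^*_ε + \uu(r,t)(c^*_ε)^3.
\]
Since $\uu_r<0$ one has $\uu(r,t)\le u^*(ε)-v(ε,t)<0$, so that $\uu(r,t)(c^*_ε)^3\le(u^*(ε)-v(ε,t))(c^*_ε)^3$; bounding also $\frac{n-1}{r}\le\frac{n-1}{ε}$, the right-hand side is at most $v(ε,t)(λ^2-(c^*_ε)^3)+\frac{n-1}{ε}c^*_ε+u^*(ε)(c^*_ε)^3$, which by \eqref{defceps:uerinnen} (and upon taking $c^*_ε$ large enough so that, in addition, $(c^*_ε)^3\ge λ^2$, compatibly with Definition~\ref{def:ceps}) is nonpositive. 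Hence $\uu$ is a subsolution.

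To deduce $\uu\le\ue$, I would set $w:=\ue-\uu$, use a mean-value decomposition
$\ue f_ε(\uer)-\uu f_ε(\uu_r)=f_ε(\uer)w+\uu f'_ε(ξ)w_r$
to see that $w$ satisfies a linear parabolic differential inequality on $\Ome\times(0,∞)$ with coefficients bounded on $\Omebar\times[0,T]$ for every $T>0$ (invoking Lemma~\ref{lem:ue:higherregularity}, Lemma~\ref{lem:ue-bd}, and compact support of $f_ε$), with $w\ge 0$ on the parabolic boundary. The standard parabolic comparison principle \cite[Prop.~52.6]{QS} then gives $w\ge 0$; since $w(ε,t)=0$ for every $t>0$, the one-sided outward radial derivative satisfies $w_r(ε,t)\ge 0$, i.e.\ $\uer(ε,t)\ge \uu_r(ε,t)=-c^*_ε$.

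The main obstacle I foresee is exactly the subsolution inequality: the positive contribution $λ^2 v(ε,t)$ has to be absorbed, and this balancing relies precisely on the fact that the cubic negative term $u^*(ε)(c^*_ε)^3$ dominates the linear positive term $\frac{n-1}{ε}c^*_ε$ up to a controlled remainder, which is the content of \eqref{defceps:uerinnen}. Once the subsolution character of $\uu$ is secured, the remaining steps (boundary/initial verification, linearisation and comparison, Hopf-type reading off of the derivative) are routine.
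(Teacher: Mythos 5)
Your proof is correct and follows essentially the same route as the paper: the identical comparison function $\uu(r,t)=u^*(ε)-v(ε,t)-c^*_{ε}(r-ε)$, the same verifications on the parabolic boundary (equality at $r=ε$, concavity of $r^{1/3}$ at $r=R$, \eqref{defceps:uner} at $t=0$), a comparison principle, and then reading off the one-sided radial derivative at $r=ε$. The only (harmless) deviations are that you keep $\uu$ rather than $\ue$ as the coefficient of $f_{ε}(\uu_r)$, which forces the additional requirement $(c^*_{ε})^3\ge λ^2$ to absorb the term $λ^2 v(ε,t)$ — the paper instead absorbs $-v_t(ε,t)$ through the constant $c_v$ built into \eqref{defceps:uerinnen} — and that you spell out the mean-value linearisation behind the application of the comparison principle.
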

\begin{proof}
%  We introduce $c_1=c_1(ε):=-e^{λ^2t}v_t(ε,t)$ (which is constant wrt. $t$ according to \eqref{def:v}), let $C\in ℝ$ be large enough so as to satisfy 
%  \[
%   c_1+\f{n-1}{ε}C+u^*(ε)C^3 \le 0 \quad \text{ and } \max\set{\sup |u_{0εr}|, \sup |u^*_r|} \le C, 
%  \]
% and 
We define $\uu(r,t):=(u^*-v)(ε,t)+c^*_{ε}(ε-r)$. Then $\uu(ε,t)=\ue(ε,t)$ for all $t>0$ due to the boundary condition in \eqref{eq:ue:f}; 
by \eqref{defu0eps:value-at-eps} and \eqref{defceps:uner}, 
\[
 \uu(r,0)=u_{0ε}(ε)-c^*_{ε}(r-ε)\le u_{0ε}(ε) - \int_ε^r \sup |u_{0εr}| \le u_{0ε}(r), 
\]
for every $r\in(ε,R)$, and similarly by \eqref{defceps:ustarr}, 
\[
 \uu(R,t) = u^*(ε)-v(ε,t)-c^*_{ε}(R-ε)\le u^*(ε)-c^*_{ε}(R-ε) \le u^*(R)=\ue(R,t)
\]
for every $t>0$. 
Due to Definition~\ref{def:feps}, $f_{ε}(c^*_{ε})=(c^*_{ε})^3$ and hence, by Lemma~\ref{lem:ue-bd} and \eqref{defceps:uerinnen},
\[
 \uu_t-Δ\uu-\ue f_{ε}(\uu_r) = -v_t(ε,t)+\f{n-1}r c^*_{ε} + (c_{ε}^*)^3\ue \le e^{-λ^2t}c_v+\f{n-1}{ε}c^*_{ε}+u^*(ε)(c^*_{ε})^3\le 0.
\]
Therefore, comparison (\cite[Prop.~52.6]{QS}) implies 
\[
 \ue(r,t)\ge \uu(r,t) \qquad \text{for all } t>0,\, r\in(ε,R),
\]
and as $\ue(ε,t)=\uu(ε,t)$ for every $t>0$, this shows that $\uer(ε,t)\ge \uu_r(ε,t)=-c_{ε}^*$ for every $t>0$. 
\end{proof}

The previous lemmata and a first Bernstein-type comparison of $\uer^2$ confirm that including $f_{ε}$ in \eqref{eq:ue:f} -- although necessary for application of the classical existence theorems -- has not altered the equation. 

\begin{lemma}\label{lem:gradientbound}
 For every $ε>0$ we have
 \[
  \sup_{\Ome\times(0,∞)} |\na \ue|\le c_{ε}^*.
 \]
\end{lemma}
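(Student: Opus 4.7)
Because $\ue$ is radially symmetric, $|\nabla \ue|=|\uer|$, and since Lemma~\ref{lem:uer:nonpositive} already gives $\uer\le 0$ in $\Ome\times(0,∞)$, it suffices to show that $\uer\ge -c_{ε}^*$, for which we will prove the equivalent statement
\[
  w:=\uer^2 \le (c_{ε}^*)^2 \qquad \text{in } \Ome\times(0,∞).
\]
The plan is to apply a Bernstein-type comparison argument to $w$: differentiating the equation in \eqref{eq:ue:f} in $r$ and multiplying by $2\uer$, and using that $\Delta w = 2\uerr^2 + 2\uer\uerrr + \tfrac{2(n-1)}{r}\uer\uerr$ (so $2\uer(\uerrr + \tfrac{n-1}{r}\uerr) = \Delta w - 2\uerr^2$), one finds, with $w_r=2\uer\uerr$, that $w$ satisfies in $\Ome\times(0,∞)$
\[
 w_t - \Delta w - \ue f_{ε}'(\uer)\,w_r + \f{2(n-1)}{r^2}\,w - 2 f_{ε}(\uer)\,w = -2\uerr^2 \le 0.
\]
The constant $\overline{w}\equiv(c_{ε}^*)^2$ is a classical supersolution of the corresponding equation: the first three terms vanish, and since $\uer\le 0$ and $f_{ε}\le 0$ on $(-∞,0)$ by Definition~\ref{def:feps}, one has $f_{ε}(\uer)\le 0\le \tfrac{n-1}{r^2}$, so the remaining two terms combine into a nonnegative quantity.

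It remains to check the parabolic boundary. On $\set{r=R}$, Lemma~\ref{lem:uer:bounds:aussen} combined with \eqref{defceps:ustarr} yields $\uer^2(R,t)\le (u_r^*(R))^2\le (c_{ε}^*)^2$; on $\set{r=ε}$, Lemmata~\ref{lem:uer:nonpositive:innen} and \ref{lem:uer:bounded:innen} give $|\uer(ε,t)|\le c_{ε}^*$; and at $t=0$, \eqref{defceps:uner} provides $|u_{0εr}|\le c_{ε}^*$. The coefficients of the linear operator for $w$ (namely $\ue f_{ε}'(\uer)$ and $-2f_{ε}(\uer)+\tfrac{2(n-1)}{r^2}$) are bounded on $\Omebar\times[0,T]$ for every $T>0$ thanks to Lemma~\ref{lem:ue-bd} and boundedness of $f_{ε}$ and $f_{ε}'$, while Lemma~\ref{lem:ue:higherregularity} guarantees that $w$ has enough regularity to apply the comparison principle \cite[Prop.~52.8]{QS} as was done in Lemma~\ref{lem:uer:nonpositive}.

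The main obstacle is simply the bookkeeping in deriving the differential inequality for $w$ and verifying that the sign of the lower-order terms works out --- this is exactly where the nonpositivity of $\uer$ established in Lemma~\ref{lem:uer:nonpositive} (and the sign condition on $f_{ε}$ in Definition~\ref{def:feps}) is needed. Once the inequality and boundary/initial bounds are in hand, the comparison principle yields $w\le(c_{ε}^*)^2$ throughout $\Ome\times(0,∞)$, which is the claim.
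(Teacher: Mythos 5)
Your argument is correct and follows essentially the same route as the paper: a Bernstein-type comparison for the square of the gradient, with the nonpositivity of $\uer$ from Lemma~\ref{lem:uer:nonpositive} and the sign condition on $f_{ε}$ ensuring the differential inequality, and with the same boundary/initial checks via Lemmata~\ref{lem:uer:bounds:aussen}, \ref{lem:uer:nonpositive:innen}, \ref{lem:uer:bounded:innen} and \eqref{defceps:ustarr}, \eqref{defceps:uner}. The only (immaterial) difference is that you work in radial coordinates and absorb the terms $\tfrac{2(n-1)}{r^2}w-2f_{ε}(\uer)w$ into the operator as a nonnegative zero-order coefficient, whereas the paper computes $\calM[|\na\ue|^2]=2|\na\ue|^2f_{ε}(\uer)-2|D^2\ue|^2\le 0$ directly in Cartesian form (note $|D^2\ue|^2=\uerr^2+\tfrac{n-1}{r^2}\uer^2$, so the two computations coincide).
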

\begin{proof}
% \gray{Lemma~\ref{lem:uer:bounds:aussen}: $u^*_r(R)\le \uer(R,t)\le 0$; \eqref{defceps:ustarr}: $-c_{ε}^*\le u^*_r$.\\
% Lemma~\ref{lem:uer:nonpositive:innen}: $\uer(ε,t)\le 0$, Lemma~\ref{lem:uer:bounded:innen}: $\uer(ε,t)\ge -c_{ε}^*$.\\
% \eqref{defceps:uner}:$c_{ε}^*\ge |\na u_{0ε}|$.}\\
 We let $\calM[ϕ]:=ϕ_t-Δϕ-f'_{ε}(\uer)\ue ϕ_r$. 
% and $C:=\max\set{\norm[∞]{\na u_{0ε}},\sup_{∂\Ome\times(0,∞)} |\na \ue|^2}$ (which is finite by Lemma\ref{lem:uer:bounds:aussen} combined with Lemma~\ref{lem:uer:bounded:innen}).
Then $\calM[c^*_{ε}]=0$ and 
% \gray{$\na\cdot(D^2u\na u)= ∂_j(∂_{ij} u∂_iu) = ∂_{ijj}u∂_u +(∂_{ij}u)^2 = \na u \na Δu + |D^2u|^2$} 
 \begin{align*}
  \calM[|\na \ue|^2] &= 2\na \ue\cdot\naΔ\ue + 2|\na \ue|^2f_{ε}(\uer) + 2\ue f'_{ε}(\uer)\na \ue\cdot \na \uer \\
  &\quad  - \nabla\cdot(2D^2\ue \na \ue) - 2 f'_{ε}(\uer)\ue\na \ue\cdot \na\uer\\
  &= 2|\na\ue|^2f_{ε}(\uer)-2|D^2\ue|^2\qquad  \text{in } \Ome\times(0,∞). 
 \end{align*}
 In view of Lemma~\ref{lem:uer:nonpositive}, $\calM[|\na\ue|^2]\le 0$. 
 Lemma~\ref{lem:uer:bounds:aussen} and \eqref{defceps:ustarr} together with Lemmata \ref{lem:uer:nonpositive:innen} and \ref{lem:uer:bounded:innen} show that $(c_{ε}^*)^2\ge |\na\ue|^2$ on $∂\Ome\times(0,∞)$, and \eqref{defceps:uner} ensures the same on $\Ome\times\set0$.
 %Since $C\ge |\na\ue|^2$ on $\Ome\times\set0 \cup $ by definition of $C$, 
 Therefore, comparison (in the form of \cite[Prop.~52.10]{QS}, if one allows $f$ to also depend on $t$ there -- the necessary adaptations in the corresponding proof are minor) proves $\sup_{\Ome\times(0,∞)} |\na \ue|^2\le (c_{ε}^*)^2$ and thus the lemma. 
\end{proof}

\begin{lemma}\label{lem:ue-without-f}
 The function $\ue$ solves 
\begin{equation}\label{eq:ue}
 \begin{cases}
  \uet = Δ\ue + \ue \uer^3 & \text{in } \Ome \times(0,∞),\\
  \ue|_{∂B_ε}(\cdot,t) = u^*-v(\cdot,t)|_{∂B_ε}& \text{for all } t>0,\\
  \ue|_{∂B_R}(\cdot,t)=u_0(R)=u^*(R) & \text{for all } t>0,\\
  \ue(\cdot,0)=u_{0ε} & \text{in } \Omebar.
 \end{cases}
\end{equation}
\end{lemma}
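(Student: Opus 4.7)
The proof should be essentially immediate from the preceding estimates. The plan is to show that the modification $f_\varepsilon$ introduced in Definition~\ref{def:feps} is inactive along the solution $\ue$, so that replacing $f_\varepsilon(\uer)$ by $\uer^3$ in the PDE of \eqref{eq:ue:f} changes nothing.

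Concretely, I would argue as follows. By Lemma~\ref{lem:gradientbound} we have $|\nabla \ue|\le c^*_{ε}$ throughout $\Ome\times(0,∞)$, and since $\ue$ is radially symmetric this yields $|\uer(r,t)|\le c^*_{ε}$ pointwise. By Definition~\ref{def:feps}, $f_{ε}(s)=s^3$ on $[-c^*_{ε},c^*_{ε}]$, so $f_{ε}(\uer(r,t))=\uer^3(r,t)$ for every $(r,t)\in\Ome\times(0,∞)$. Consequently, the PDE in \eqref{eq:ue:f} becomes $\uet=Δ\ue+\ue\uer^3$, which is exactly the PDE in \eqref{eq:ue}. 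The boundary and initial conditions in \eqref{eq:ue:f} and \eqref{eq:ue} are identical, so nothing else needs to be checked.

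There is no real obstacle here; all the work went into Lemma~\ref{lem:gradientbound} (and the chain of sign and boundary estimates Lemmata~\ref{lem:uer:bounds:aussen}--\ref{lem:uer:bounded:innen} feeding into it). The proof of this lemma is just a one-line observation that $|\uer|\le c^*_{ε}$ places $\uer$ in the region where $f_{ε}$ agrees with the cube.
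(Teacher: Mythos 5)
Your argument is correct and coincides with the paper's own proof: both invoke Lemma~\ref{lem:gradientbound} to get $|\uer|\le c^*_{ε}$, conclude $f_{ε}(\uer)=\uer^3$ from Definition~\ref{def:feps}, and note that the boundary and initial conditions are unchanged. Nothing further is needed.
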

\begin{proof}
 Lemma~\ref{lem:gradientbound} guarantees that $|\uer|=|\na\ue|\le c_{ε}^*$ in $\Ome\times(0,∞)$, therefore $f(\uer)=\uer^3$ by Definition~\ref{def:feps}, and Lemma~\ref{lem:ue-without-f} becomes a corollary of Lemma~\ref{lem:ex:ue}.
\end{proof}

\subsection{A priori estimates}\label{subsec:apriori}
Inspired by the reasoning in \cite[Sec. 2]{AF}, which goes back to \cite{Bernstein}, we will now obtain an $ε$-independent bound for $\uer$ from a comparison principle applied to, essentially, a large, even power of $\uer$. 
Lack of $ε$-independent control over $\uer$ on the inner boundary (for which we refer to Lemma~\ref{lem:uer:bounded:innen} and which is natural if seen in light of the unbounded derivative of $u^*$ near $r=0$) makes inclusion of a cutoff function necessary. 
% \gray{It also turns out that the computations won't work for the problem with $f$. On the other hand, one can use $(r-δ)_+^{q}\uer^p$ for different $q$; but $q\ge p+3$ remains necessary.}

\begin{lemma}\label{lem:bound:uer:Lploc-prelim}
 Let $p\ge 4$ be an even integer. 
 There is $c>0$ such that
 \begin{equation}\label{eq:weighted-uer-estimate}
  (r-δ)_+^{p+3} \uer^p(r,t) \le c(1+\sup_{r>δ} (r-δ)_+^{p+3} u_{0εr}^p+t)
 \end{equation}
 for every $δ>0$, $ε\in(0,δ)$ and $t>0$, $r\in(0,R)$.
\end{lemma}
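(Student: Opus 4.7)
I plan a Bernstein-type argument applied to $w:=\uer$. Differentiating \eqref{eq:ue} in $r$ shows that
\[
 w_t=w_{rr}+\f{n-1}{r}w_r-\f{n-1}{r^2}w+w^4+3\ue\, w^2 w_r\qquad\text{in }\Ome\times(0,∞),
\]
and setting $\zeta(r):=(r-δ)_+$ I aim to show $Z:=\zeta^{p+3}w^p\le\Phi$ by comparison, where
\[
 \Phi(r,t):=c_0\kl{1+\sup_{r>δ}\zeta(r)^{p+3}u_{0εr}^p+t}
\]
for a suitable $c_0$ depending only on $n,p,α,R$; the regularity needed to apply the comparison classically is supplied by Lemma~\ref{lem:ue:higherregularity}.

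Suppose that $Z\not\le\Phi$ somewhere, let $t_0$ be the first time of violation, and let $r_0\in[ε,R]$ satisfy $Z(r_0,t_0)=\Phi(r_0,t_0)$. On $∂B_ε$, $\zeta(ε)=0$ gives $Z\equiv0$; on $∂B_R$, Lemma~\ref{lem:uer:bounds:aussen} yields $|w(R,t)|\le|u^*_r(R)|$, which is dominated by $\Phi$ for $c_0\ge R^{p+3}|u^*_r(R)|^p$; at $t=0$, $Z(\cdot,0)\le\Phi(\cdot,0)$ is immediate from the definition of $\Phi$ once $c_0\ge 1$. Hence $r_0>δ$, $t_0>0$, and at $(r_0,t_0)$ we have $Z_r=0$, $Z_{rr}\le0$ and $Z_t\ge\Phi_t=c_0$. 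Writing $q:=p+3$, the first-order condition forces $w_r=-\f{qw}{p\zeta}$; substituting this into the equation for $w$ and rearranging yields
\[
 c_0\le\kl{q+\f{q^2}{p}}\zeta^{q-2}w^p + 3|\ue|q\,\zeta^{q-1}w^{p+2} - p\,\zeta^{q}|w|^{p+3}.
\]
Because $p$ is even and $w\le0$ by Lemma~\ref{lem:uer:nonpositive}, the last term -- arising from the source $w^4$ -- carries the favorable sign $w^{p+3}\le 0$ and therefore supplies strong damping.

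The crucial step is then to combine the uniform $L^\infty$-bound on $|\ue|$ from Lemma~\ref{lem:ue-bd} with Young's inequality (exponents $3$ and $3/2$) to absorb the middle summand into $\f p2\zeta^{q}|w|^{p+3}$. Together with $\zeta\le R$ this reduces the inequality to $|w|^3\le c_2\zeta^{-3}$ at $(r_0,t_0)$, so that $Z(r_0,t_0)\le c_2^{p/3}R^3=:c_3$. Combined with $\Phi(r_0,t_0)\ge c_0$ this gives $c_0\le c_3$, contradicting the a priori choice $c_0>c_3$. Hence $Z\le\Phi$ throughout $\Omebar\times[0,∞)$, which is \eqref{eq:weighted-uer-estimate}.

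The main obstacle is the drift $3\ue w^2 w_r$, which has no intrinsic sign. The argument succeeds only through the interplay of (i) the bound on $\ue$ from Lemma~\ref{lem:ue-bd}, giving the drift a tame growth rate, and (ii) the fact that for $p$ even the source $w^4$ in the equation for $w$ becomes, in the equation for $w^p$, the damping $-pw^{p+3}\ge 0$, providing exactly the cubic absorption needed to swallow the drift via Young. This is also why the argument must be carried out for even $p$; for odd powers neither the sign of $w^p$ nor that of $w^{p+3}$ would be useful.
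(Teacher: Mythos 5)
Your argument is correct and is essentially the paper's own proof: the same Bernstein-type quantity $(r-\delta)_+^{p+3}\uer^p$ with the cutoff killing the inner boundary, the same linear-in-$t$ barrier, the same use of $\uer\le 0$ and the evenness of $p$ to turn $\uer^4$ into the cubic damping $-p(r-\delta)_+^{p+3}|\uer|^{p+3}$, and the same Young absorption of the remaining terms; the only difference is that you run the maximum principle by hand at a first touching point (using $Z_r=0$ to eliminate $w_r$), where the paper computes $\calM[w]\le c_1$ for the drift operator $\calM[\phi]=\phi_t-\Delta\phi-3\ue\uer^2\phi_r$ and cites a comparison principle. One small slip: the conjugate exponents needed to absorb the drift term $3|\ue|(p+3)(r-\delta)_+^{p+2}|\uer|^{p+2}$ into $(r-\delta)_+^{p+3}|\uer|^{p+3}$ are $\f{p+3}{p+2}$ and $p+3$ (and $\f{p+3}{p}$, $\f{p+3}{3}$ for the zeroth-order term), not $3$ and $3/2$; with that correction the absorption goes through exactly as you describe.
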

\begin{proof}
 We define $c:=\max\set{1,R^{p+3}|u_r^*(R)|^p,3(p+3))^{p+3}|u^*(R)|^{p+3} + \kl{R\f{p(p+3)^2}{p-1}}^{\f{p+3}3}}$ and fix $δ>0$ and $ε\in(0,δ)$. 
 Letting $w(r,t):=(r-δ)_+^{p+3}\uer^p(r,t)$ for $(r,t)\in(δ,R)\times(0,∞)$, in $(δ,R)\times(0,∞)$ we compute 
 \[
  w_r= (p+3) (r-δ)_+^{p+2}\uer^p + p(r-δ)_+^{p+3}\uer^{p-1}\uerr
 \]
 and 
 \begin{align*}
  w_{rr} =& (p+2)(p+3) (r-δ)_+^{p+1} \uer^p + 2p(p+3)(r-δ)_+^{p+2}\uer^{p-1}\uerr\\
  &\quad + p(p-1)(r-δ)_+^{p+3}\uer^{p-2}\uerr^2+p(r-δ)_+^{p+3}\uer^{p-1}\uerrr
 \end{align*}
as well as 
 \[
  \uert = \uerrr + \f{n-1}{r} \uerr - \f{n-1}{r^2}\uer +\uer^4+3\ue\uer^2\uerr.
 \]

For  
$\calM[ϕ]:=ϕ_t-Δϕ-3\ue\uer^2ϕ_r$ we thus obtain from \eqref{eq:ue}  
 
 \begin{align*}
  \calM[w] &= p(r-δ)_+^{p+3} \uer^{p-1}\uert - w_{rr}-\f{n-1}{r}w_r - 3\ue\uer^2w_r\\
  &= p(r-δ)_+^{p+3} \uer^{p-1}\uerrr + p\f{n-1}{r}(r-δ)_+^{p+3}\uer^{p-1}\uerr \\
  &\quad -p\f{n-1}{r^2}(r-δ)_+^{p+3}\uer^p + p(r-δ)_+^{p+3}\uer^{p+3}+3p(r-δ)_+^{p+3}\ue\uer^{p+1}\uerr\\
  &\quad - (p+2)(p+3) (r-δ)_+^{p+1} \uer^p - 2p(p+3)(r-δ)_+^{p+2}\uer^{p-1}\uerr \\&
   \quad- p(p-1)(r-δ)_+^{p+3}\uer^{p-2}\uerr^2 - p(r-δ)_+^{p+3}\uer^{p-1}\uerrr \\
  &\quad -(p+3)\f{n-1}r (r-δ)_+^{p+2}\uer^p - p\f{n-1}r (r-δ)_+^{p+3}\uer^{p-1}\uerr \\
  & \quad -3(p+3)(r-δ)_+^{p+2} \ue\uer^{p+2}-3p(r-δ)_+^{p+3}\ue\uer^{p+1}\uerr \\
  &= - p\f{n-1}{r^2}(r-δ)_+^{p+3}\uer^p + p(r-δ)_+^{p+3}\uer^{p+3}\\
  &\quad -(p+3)(p+2)(r-δ)_+^{p+1}\uer^p - 2p(p+3)(r-δ)_+^{p+2}\uer^{p-1}\uerr \\
  &\quad - p(p-1)(r-δ)_+^{p+3}\uer^{p-2}\uerr^2  \\
  &\quad -(p+3)\f{n-1}r (r-δ)_+^{p+2}\uer^p - 3(p+3)(r-δ)_+^{p+2}\ue\uer^{p+2}\\
  & \le  p(r-δ)_+^{p+3}\uer^{p+3}  - 2p(p+3)(r-δ)_+^{p+2}\uer^{p-1}\uerr - p(p-1)(r-δ)_+^{p+3}\uer^{p-2}\uerr^2 \\
  &\quad - 3(p+3)(r-δ)_+^{p+2}\ue\uer^{p+2}\qquad \text{ in } (B_R\setminus B_{δ})\times (0,∞).
 \end{align*}
 Here, by Young's inequality 
 \begin{align*}
  -&2p(p+3)(r-δ)_+^{p+2}\uer^{p-1} \uerr\\ & \le p(p-1) (r-δ)_+^{p+3} \uer^{p-2} \uerr^2 + \f{p(p+3)^2}{p-1} (r-δ)^{p+1} \uer^p\\
  &\le p(p-1) (r-δ)_+^{p+3} \uer^{p-2} \uerr^2 + (r-δ)_+^{p+3}|\uer|^{p+3} +\left((r-δ)_+ \f{p(p+3)^2}{p-1}\right)^{\f{p+3}3}
%  - 56(r-δ)_+^6\uer^3\uerr &\le 12 (r-δ)_+^7 \uer^2\uerr^2 + \f76 (r-δ)_+^5\uer^4 \\
%  &\le 12 (r-δ)_+^7 \uer^2\uerr^2 + (r-δ)_+^7|\uer|^7 + \left(\f76 (r-δ)_+\right)^{\f73} 
 \end{align*}
and
 \begin{align*}
  -3(p+3)(r-δ)_+^{p+2}\ue\uer^{p+2} \le (r-δ)_+^{p+3}|\uer|^{p+3} + (3(p+3))^{p+3} |\ue|^{p+3} 
%  - 21(r-δ)_+^6\ue\uer^6 \le (r-δ)_+^7|\uer|^7 + 21^7 |\ue|^7
 \end{align*}
in $(B_R\setminus B_{δ})\times(0,∞)$. 
 Recalling the sign of $\uer$ from Lemma~\ref{lem:uer:nonpositive} and setting $c_1:= (3(p+3))^{p+3} |u^*(R)|^{p+3} + \left(R \f{p(p+3)^2}{p-1}\right)^{\f{p+3}3}$ we hence obtain 
 \[
  \calM[w]\le c_1 \qquad \text{in } (B_R\setminus B_{δ})\times(0,∞).
 \]
 Furthermore, 
 \[
  w(R,t)\le R^{p+3}\uer^p(R,t)\le R^{p+3} (u^*_r(R))^p=:c_2\qquad \text{for all } t>0
 \]
 by Lemma~\ref{lem:uer:bounds:aussen}. 
With $c=\max\set{c_1,c_2,1}$ and $\wbar:=c(1+\sup_{r>δ} (r-δ)_+^{p+3} u_{0εr}^p+t)$ we not only have $\calM[\wbar]=c\ge \calM[w]$ in $(B_R\setminus B_{δ})\times(0,∞)$, but also $\wbar(R,t)\ge c_2 \ge w(R,t)$ for all $t>0$ and $\wbar(r,0)\ge \sup_{r>δ} (r-δ)_+^{p+3} u_{0εr}^p \ge w(r,0)$ for all $r\in (0,R)$ as well as $\wbar(δ,t)\ge0 = w(δ,t)$ for all $t>0$. 
Comparison (again by means of an adaptation of \cite[Prop.~52.10]{QS}) allows us to conclude $(r-δ)_+^{p+3}\uer^p=w\le\wbar= c(1+\sup_{r>δ} (r-δ)_+^{p+3} u_{0εr}^p+t)$ in $(B_R\setminus B_{δ})\times(0,∞)$. Additionally, for $r\in(0,δ)$, the left-hand side of this inequality is zero, and \eqref{eq:weighted-uer-estimate} holds.
\end{proof}

Next we bring Lemma~\ref{lem:bound:uer:Lploc-prelim} in a more directly applicable form. 
\begin{lemma}\label{lem:estimate:uer:pointwise}
 Let $p\ge 4$ be an even integer. 
 For every $T>0$ there is $c>0$ such that 
 \[
  |\uer(r,t)| \le c r^{-\f{p+3}p}
 \]
 for every $ε>0$, $t\in[0,T]$, $r\in(2ε,R)$.
\end{lemma}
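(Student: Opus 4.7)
The plan is to apply Lemma~\ref{lem:bound:uer:Lploc-prelim} with a careful choice of $\delta$ depending on the point $r$ at which we want to estimate, and to control the initial-data term appearing on the right-hand side uniformly in $\delta$ and $\varepsilon$ using the hypothesis on $u_{0}$.

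More precisely, fix $r\in(2\varepsilon,R)$ and set $\delta := r/2$. Since $r>2\varepsilon$ we have $\varepsilon<\delta$, so Lemma~\ref{lem:bound:uer:Lploc-prelim} applies and yields
\[
 (r/2)^{p+3}\,\uer^p(r,t)=(r-\delta)_+^{p+3}\,\uer^p(r,t)\le c\Bigl(1+\sup_{s>\delta}(s-\delta)_+^{p+3}u_{0\varepsilon r}^p(s)+t\Bigr)
\]
for all $t>0$, with $c$ independent of $\delta,\varepsilon,t,r$.

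Next I would control the supremum on the right-hand side uniformly in $\delta$ and $\varepsilon$. By \eqref{defu0eps:derivative} we have $|u_{0\varepsilon r}(s)|\le|u_{0r}(s)|$, and hypothesis \eqref{u0:derivativerestriction} gives $|u_{0r}(s)|\le Cs^{-2/3}$ for $s\in(0,R)$. Since $p$ is even, $u_{0\varepsilon r}^p(s)\le C^p s^{-2p/3}$, so using the crude bound $(s-\delta)_+^{p+3}\le s^{p+3}$ we get
\[
 \sup_{s>\delta}(s-\delta)_+^{p+3}u_{0\varepsilon r}^p(s)\le C^p\sup_{s\in(0,R)}s^{p+3-\frac{2p}{3}}=C^p R^{(p+9)/3},
\]
a constant independent of $\delta,\varepsilon,r$.

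Combining these and using $t\le T$, there exists $c'=c'(p,T)>0$ with $(r/2)^{p+3}\uer^p(r,t)\le c'$ for all $\varepsilon>0$, $t\in[0,T]$, $r\in(2\varepsilon,R)$. Taking the $p$-th root (and recalling $\uer\le 0$ by Lemma~\ref{lem:uer:nonpositive}, so $|\uer|^p=\uer^p$) yields
\[
 |\uer(r,t)|\le 2^{(p+3)/p}(c')^{1/p}\,r^{-(p+3)/p},
\]
which is the desired estimate. There is no real obstacle here; the only point needing care is verifying that the initial-data supremum is bounded independently of $r,\varepsilon,\delta$, which is precisely the purpose of assumption \eqref{u0:derivativerestriction}.
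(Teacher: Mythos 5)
Your proposal is correct and follows essentially the same route as the paper: apply Lemma~\ref{lem:bound:uer:Lploc-prelim} with $\delta$ proportional to the point $r$ of interest (the paper fixes $\delta$ and evaluates at $r=2\delta$, which is the same thing), and bound the initial-data supremum uniformly via \eqref{defu0eps:derivative} and \eqref{u0:derivativerestriction} using $p+3>\tfrac{2p}{3}$. No gaps.
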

\begin{proof}
 Conditions \eqref{u0:derivativerestriction} and \eqref{defu0eps:derivative} ensure the existence of $c_1>0$ such that 
 \[
  |u_{0εr}|\le c_1r^{-\f23} \quad \text{on } B_R\setminus B_{ε}
 \]
 for every $ε>0$, and hence 
 \[
  (r-δ)_+^{p+3} u_{0εr}^p \le c_1 (r-δ)_+^{p+3} r^{-\f{2p}3}
 \]
 for every $r\in(δ,R)$ and $ε<δ$. Noting that $r\mapsto (r-δ)_+^{p+3}r^{-\f{2p}3}$ is increasing on $(δ,R)$ due to $p+3>\f{2p}3$, we conclude that 
 \[
  (r-δ)_+^{p+3} u_{0εr}^p \le c_1 R^{p-\f{2p}3} = c_1R^{\f{p}3}
 \]
 for every $r\in(δ,R)$ and $ε\in(0,δ)$. 
 Lemma~\ref{lem:bound:uer:Lploc-prelim} hence implies that 
  there is $c_2>0$ such that 
 \[
  (r-δ)_+^{p+3} \uer^p(r,t) \le c_2(1+t)
 \]
for every $δ>0$, $ε\in(0,δ)$ and $t>0$, $r\in(0,R)$.
 If we insert $r=2δ$, we obtain 
 \[
  |\uer(2δ,t)| \le c_3 (2δ)^{-\f{p+3}p} (1+t) 
 \]
 for every $δ>0$, $ε\in(0,δ)$, $t>0$, where $c_3:=2^{\f{p+3}p}c_2$. We conclude by letting $c:=c_3(1+T)$. 
\end{proof}

As preparation of the compactness argument that will finally establish existence of a solution of \eqref{ueq} in $(B_R\setminus\set0)\times(0,∞)$, we use classical regularity theory for parabolic PDEs and rely on Lemma~\ref{lem:estimate:uer:pointwise} as a starting point. 

\begin{lemma}\label{lem:hoelderboundC1}
 Let $β\in(0,1)$. 
 Let $K$ be a compact subset of $(B_R\setminus\set0)\times(0,∞)$. Then there are $ε_0>0$ and $c>0$ such that for every $ε\in(0,ε_0)$ 
 \[
  \norm[C^{1+β,\f{1+β}2}(K)]{\ue} \le c.
 \]
\end{lemma}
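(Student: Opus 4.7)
The plan is to use the pointwise gradient estimate of Lemma~\ref{lem:estimate:uer:pointwise} to bound the nonlinearity $\ue \uer^3$ uniformly in $L^\infty$ on a neighborhood of $K$, and then invoke standard interior parabolic regularity.

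First I would fix nested compact sets $K\subset K'\subset K''\subset (B_R\setminus\set0)\times(0,∞)$ together with an $ε_0>0$ and an $r_0>0$ such that every point of $K''$ satisfies $r\ge r_0>2ε_0$ and belongs to $\Ome\times(0,∞)$ for all $ε\in(0,ε_0)$; this is possible because $K$ is already bounded away from $r=0$ and from $t=0$ and from $∂B_R\times(0,∞)$. Pick any even $p\ge 4$, let $T$ be any upper bound for $t$ on $K''$, and apply Lemma~\ref{lem:estimate:uer:pointwise} on $K''$: together with the $L^\infty$ bound from Lemma~\ref{lem:ue-bd}, this yields constants $M_1,M_2>0$ such that
\[
 \norm[L^\infty(K'')]{\ue}\le M_1, \qquad \norm[L^\infty(K'')]{\na \ue}\le M_2\qquad \text{for all } ε\in(0,ε_0).
\]

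Next, reading equation~\eqref{eq:ue} as $\uet-Δ\ue=F_{ε}$ with $F_ε:=\ue\uer^3$, the previous bounds give $\norm[L^\infty(K'')]{F_ε}\le M_1 M_2^3$ uniformly in $ε\in(0,ε_0)$. Interior parabolic $L^p$ estimates (e.g.~\cite[Thm.~IV.9.1]{LSU}) applied to the heat operator on the pair $K'\subset K''$ therefore produce, for any $p<\infty$, a constant $c_p>0$ independent of $ε\in(0,ε_0)$ such that
\[
 \norm[W^{2,1}_p(K')]{\ue}\le c_p.
\]
Choosing $p$ with $p>n+2$ and $1-\f{n+2}p\ge β$ and using the parabolic Sobolev embedding $W^{2,1}_p(K')\hookrightarrow C^{1+β,\f{1+β}2}(K)$ (whose constant depends only on $p$, $n$, and the geometry of $K\subset K'$) then yields the desired estimate. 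Alternatively, with $\ue$ and $\na\ue$ already uniformly bounded on $K''$, Lieberman's interior Hölder estimate for the gradient (see \cite[Thm.~4.6]{Lieberman_AnnScPisa_86}, analogously to its use in Lemma~\ref{lem:ue:higherregularity}) supplies a $C^{1+β,\f{1+β}2}$ bound on $K$ directly.

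The main bookkeeping point is arranging $K''$ so that for all sufficiently small $ε$ it avoids both the genuine singularity at $r=0$ and the artificial inner boundary $∂B_ε$ — but this is immediate once $K$ has positive distance from $\set{r=0}$. Once $\ue$ and $\na\ue$ are controlled uniformly on a slightly enlarged set, the passage to $C^{1+β,\f{1+β}2}$ is entirely routine linear parabolic theory, with no further use of the structure of the equation.
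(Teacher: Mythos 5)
Your proposal is correct and follows essentially the same route as the paper: the key step is the uniform $L^\infty$ bound on the nonlinearity $\ue\uer^3$ near $K$ coming from Lemma~\ref{lem:estimate:uer:pointwise} together with Lemma~\ref{lem:ue-bd}, after which everything reduces to linear parabolic regularity with bounded right-hand side — the paper localizes with a cutoff and cites \cite[Thm.~7.4]{friedman_parabolic}, while you use interior $W^{2,1}_p$ estimates plus parabolic Sobolev embedding, an equally valid instantiation. The only caveat is that your alternative via \cite[Thm.~4.6]{Lieberman_AnnScPisa_86} would give the estimate for \emph{some} Hölder exponent rather than for an arbitrarily prescribed $β\in(0,1)$, so it is the $L^p$ route (or Friedman's potential estimate) that delivers the lemma exactly as stated.
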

\begin{proof}
 Let us choose $δ>0$ so small that $(B_{δ}\times(0,∞))\cap K=\emptyset$.
 Let $η\in C_c^{∞}((B_R\setminus B_{δ})\times(0,∞))$ be such that $η\equiv 1$ on $K$. Then for each $ε\in(0,ε_0)$, $ε_0:=\f{δ}2$, $η\ue$ is well-defined on $(B_R\setminus B_{δ})\times(0,∞)$ and $(η\ue)(δ,t)=0$, $(η\ue)(R,t)=0$ for every $t>0$, $(η\ue)(r,0)=0$ for every $r\in(δ,R)$ and 
 \[
  (η\ue)_t= Δ(η\ue) + g_{ε} \qquad \text{in } (B_R\setminus B_{δ})\times(0,∞), 
 \]
 where $g_{ε}:=- \ue Δη - 2\na \ue \cdot \naη+η\ue \uer^3-η_t\ue$. Lemma~\ref{lem:estimate:uer:pointwise} enables us to find $c_1>0$ satisfying  
 \[
  \norm[L^{∞}((B_R\setminus B_{δ})\times(0,∞))]{g_{ε}}=\norm[L^\infty(\supp η)]{g_{ε}} \le c_1 
 \]
 for every $ε\in(0,ε_0)$. Consequently, \cite[Thm. 7.4, p.~191]{friedman_parabolic} shows that with some $c_2>0$, 
 \[
  \norm[C^{1+β,\f{1+β}2}((B_R\setminus B_{δ})\times(0,∞))]{η\ue}\le c_2
\qquad \text{for every } ε\in(0,ε_0). \qedhere
 \]
\end{proof}

Leveraging Lemma~\ref{lem:hoelderboundC1}, we can achieve higher regularity analogously. 
\begin{lemma}\label{lem:hoelderboundC2}
 Let $β\in(0,1)$. 
 Let $K$ be a compact subset of $(B_R\setminus\set0)\times(0,∞)$. Then there are $ε_0>0$ and $c>0$ such that 
 \[
  \norm[C^{2+β,1+\f{β}2}(K)]{\ue} \le c \qquad \text{for every }
ε\in(0,ε_0).
 \]
\end{lemma}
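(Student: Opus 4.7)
The plan is to reproduce the cutoff--and--Schauder argument of Lemma~\ref{lem:hoelderboundC1} but one derivative higher, using Lemma~\ref{lem:hoelderboundC1} itself as the input regularity that makes this upgrade possible. Concretely, I first choose a slightly enlarged compact set $K'\subset (B_R\setminus\set0)\times(0,∞)$ with $K\subset \operatorname{int}K'$, and pick $δ>0$ and $T_0<T_1$ such that $K'\subset (B_R\setminus B_{δ})\times[T_0,T_1]$. I then fix a cutoff $η\in C_c^\infty((B_R\setminus B_{δ/2})\times(T_0/2,T_1+1))$ with $η\equiv1$ on $K'$ and apply Lemma~\ref{lem:hoelderboundC1} on $\supp η$ to obtain, for all sufficiently small $ε$, an $ε$-independent bound
\[
 \norm[C^{1+β,\f{1+β}2}(\supp η)]{\ue}\le c_1.
\]

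Next I would like to upgrade the inhomogeneity in the equation. Since $\ue$ is bounded uniformly in $ε$ by Lemma~\ref{lem:ue-bd} and $\uer$ is bounded and $β/2$-Hölder in time, $β$-Hölder in space on $\supp η$ by the preceding step, the product $\ue\uer^3$ lies in $C^{β,\f β2}(\supp η)$ with an $ε$-independent bound. Setting $\tilde η\in C_c^\infty(\operatorname{int} K')$ with $\tilde η\equiv 1$ on $K$, the function $\tilde η\ue$ vanishes on the parabolic boundary of $(B_R\setminus B_{δ})\times(T_0,T_1+1)$ and satisfies
\[
 (\tilde η\ue)_t - Δ(\tilde η\ue) = g_{ε}:= \tilde η\ue \uer^3 - \tilde η_t\ue - \ue Δ\tilde η - 2\na\ue\cdot\na\tilde η
\]
in $(B_R\setminus B_{δ})\times(T_0,T_1+1)$, with $\norm[C^{β,\f β2}]{g_{ε}}\le c_2$ uniformly in $ε$ (here I use the $C^{1+β,\f{1+β}2}$ bound on $\ue$ over $\supp η\supset\supp\tilde η$ to Hölder-bound $\na\ue$ as well).

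Classical parabolic Schauder estimates -- for example \cite[Thm.~IV.5.2]{LSU} used for $\tilde η\ue$ on the smooth cylindrical domain $(B_R\setminus B_{δ})\times(T_0,T_1+1)$ with zero parabolic boundary data -- then yield
\[
 \norm[C^{2+β,1+\f β2}((B_R\setminus B_{δ})\times[T_0,T_1+1])]{\tilde η\ue}\le c_3
\]
for every $ε\in(0,ε_0)$, from which the claimed bound on $K$ follows since $\tilde η\equiv 1$ there. The only subtle point is to make sure the Schauder theorem is applied in a setting where the data vanish on the parabolic boundary (so that no compatibility issues at $t=T_0$ or on $\{r=δ\}\cup\{r=R\}$ arise), which is exactly the role of the cutoff $\tilde η$; this is the step I expect to require the most care, but it is entirely parallel to the argument of Lemma~\ref{lem:hoelderboundC1}.
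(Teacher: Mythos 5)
Your proposal is correct and follows essentially the same route as the paper: cut off with $η$, use Lemma~\ref{lem:hoelderboundC1} on $\supp η$ to get a uniform $C^{β,\f β2}$ bound on the inhomogeneity $g_{ε}$ (in particular on $\ue\uer^3$ and $\na\ue\cdot\naη$), and then apply interior-type Schauder estimates to the Dirichlet problem with vanishing parabolic boundary data. The paper uses a single cutoff and cites \cite[Thm.~3.6, p.~65]{friedman_parabolic} where you use a nested pair of cutoffs and \cite[Thm.~IV.5.2]{LSU}, but these are only cosmetic differences.
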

\begin{proof}
 Again, we choose $δ>0$ so small that $(B_{δ}\times(0,∞))\cap K=\emptyset$, $η\in C_c^{∞}((B_R\setminus B_{δ})\times(0,∞))$ such that $η\equiv 1$ on $K$ and $ε_0:=δ$ and consider the Dirichlet problem of $(η\ue)_t= Δ(η\ue) + g_{ε}$ in $(B_R\setminus B_{δ})\times(0,∞)$, with $g_{ε}:=- \ue Δη - 2\na \ue \cdot \naη+η\ue \uer^3-η_t\ue$. 
 Thanks to Lemma~\ref{lem:hoelderboundC1}, applied to the compact set $\supp η$, there is $c_1>0$ fulfilling 
 \[
  \norm[C^{β,\f{β}2}((B_R\setminus B_{δ})\times(0,∞))]{g_{ε}}=\norm[C^{β,\f{β}2}(\supp η)]{g_ε}\le c_1
\qquad \text{for every } ε\in(0,ε_0).
 \]
We can therefore rely on \cite[Thm.~3.6, p.~65]{friedman_parabolic} so as to conclude the existence of $c_2>0$ such that 
 \[
  \norm[C^{2+β,1+\f{β}2}((B_R\setminus B_{δ})\times(0,∞))]{η\ue}\le c_2
\qquad \text{for every } ε\in(0,ε_0).\qedhere
 \]
\end{proof}

In the next step we aim for lower Hölder regularity, but strive to include the boundaries at $r=R$ and $t=0$.
\begin{lemma}\label{lem:hoelderboundCalpha}
 There is $β\in(0,1)$ such that 
 for every compact subset $K$ of $(\Bbar_R\setminus\set0)\times[0,∞)$ there are $ε_0>0$ and $c>0$ satisfying 
 \[
  \norm[C^{β,\f{β}2}(K)]{\ue}\le c \qquad \text{for every } ε\in(0,ε_0). 
 \]
\end{lemma}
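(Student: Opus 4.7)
Since $K$ is compact in $(\Bbar_R\setminus\set 0)\times[0,\infty)$, one can fix $T>0$ and $δ>0$ with $K\subset (\Bbar_R\setminus B_{2δ})\times[0,T]$. The key new feature compared to Lemmata~\ref{lem:hoelderboundC1} and~\ref{lem:hoelderboundC2} is that $K$ may touch the spatial boundary $\set{r=R}$ and the initial slice $\set{t=0}$, so the cutoff must retain the value $1$ there. I would therefore pick a radial $η\in C^\infty(\Bbar_R)$ with $η\equiv 1$ on $\set{r\ge 2δ}$ and $η\equiv 0$ on $\set{r\le δ}$, and consider $w_{ε}:=η\ue$ on $Q:=(B_R\setminus B_{δ/2})\times(0,T+1)$. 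Direct differentiation together with~\eqref{eq:ue} yields $(w_{ε})_t-Δw_{ε}=g_{ε}$ with
\[
 g_{ε}:=-\ue Δη-2\naη\cdot\na\ue+η\ue\uer^3 \qquad\text{in } Q.
\]

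\textbf{Uniform boundedness of data.} Lemma~\ref{lem:estimate:uer:pointwise} (applied e.g. with $p=4$) bounds $|\uer|$ uniformly on $\supp η\subset\set{r\ge δ}$ for all $ε$ smaller than some $ε_0\le δ/4$, so together with Lemma~\ref{lem:ue-bd} we have $\norm[L^\infty(Q)]{g_{ε}}\le c_1$ independently of $ε$. On the parabolic boundary of $Q$ one reads off $w_{ε}(R,t)=u^*(R)$ (constant), $w_{ε}(δ/2,t)=0$, and -- shrinking $ε_0$ further if necessary so that Remark~\ref{rem:u0eps-and-u0-coincide} implies $u_{0ε}=u_0$ on $\Bbar_R\setminus B_{δ/4}$ -- $w_{ε}(\cdot,0)=η u_0$, which is of class $C^2$ and independent of $ε$. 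Crucially, the zeroth-order compatibility condition at the spacetime corner $(R,0)$ holds because $η(R)u_0(R)=u_0(R)=u^*(R)$, which is precisely~\eqref{u0:aeussererrand}; at $(δ/2,0)$ compatibility is trivial.

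\textbf{Conclusion.} Standard Hölder regularity up to the parabolic boundary for solutions of linear parabolic equations with bounded right-hand side and uniformly regular, compatible parabolic boundary data (as provided e.g. by the results in \cite{LSU}) then furnishes $β\in(0,1)$ and $c>0$ with
\[
 \norm[C^{β,β/2}(\overline{Q})]{w_{ε}}\le c \qquad \text{for every } ε\in(0,ε_0).
\]
Since $w_{ε}\equiv\ue$ on $K\subset\set{η\equiv 1}$, this gives the asserted estimate. The main obstacle I anticipate is the uniform regularity and corner compatibility of the parabolic boundary data for $w_{ε}$; the latter hinges decisively on~\eqref{u0:aeussererrand}, without which $w_{ε}$ would carry a jump at $(R,0)$ and Hölder continuity up to that corner would fail.
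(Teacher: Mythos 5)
Your proposal is correct and follows essentially the same route as the paper: cut off away from the origin, use Lemma~\ref{lem:estimate:uer:pointwise} together with Lemma~\ref{lem:ue-bd} to bound the right-hand side uniformly in $ε$, and invoke an up-to-the-parabolic-boundary H\"older estimate from \cite{LSU} (you are in fact more careful than the paper about the cutoff near $\set{r=R}$ and $\set{t=0}$ and about corner compatibility via \eqref{u0:aeussererrand}). The only point you leave implicit is that the lemma's quantifier order requires $β$ to be independent of $K$ (hence of $δ$ and $T$), which does hold here since the De Giorgi--Nash--Moser exponent depends only on $n$ and the regularity class of the boundary data, but deserves a sentence.
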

\begin{proof}
   We choose $δ>0$ so small that $(B_{δ}\times(0,∞))\cap K=\emptyset$ and let $ε_0\in(0,\f{δ}2)$ be such that $u_{0ε}=u_0$ on $B_R\setminus B_{δ}$ for every $ε\in(0,ε_0)$ (cf. Remark~\ref{rem:u0eps-and-u0-coincide}). With $η\in C_c^{∞}((B_R\setminus B_{δ})\times(0,∞))$ such that $η\equiv 1$ on $K$ and relying on Lemma~\ref{lem:estimate:uer:pointwise}, we can conclude from \cite[Thm. III.10.1]{LSU} that with some $c>0$, 
   \[
    \norm[C^{β,\f{β}2}(K)]{η\ue}\le c\qquad \text{for every } ε\in(0,ε_0), 
   \]
   where $β$ can be determined independently of $δ$, $K$ and $η$.
\end{proof}

\subsection{Solving the limit problem}\label{subsec:solve}
With these estimates at hand, we are ready to carry out the existence proof.

\begin{lemma}\label{lem:ex}
 There is a function $u\in C(\Bbar_R\times[0,∞))\cap
C^{2,1}((B_R\setminus\set0)\times(0,∞))$ solving \eqref{ueq}. 
 This function is radially symmetric, satisfies 
 \begin{equation}\label{eq:uest}
  u^*(r)\ge u(r,t)\ge u^*(r) - v(r,t) \qquad \text{for all } (r,t)\in [0,R]\times [0,∞)
 \end{equation}
 and, in particular, with some $c>0$ we have
 \begin{equation}\label{eq:uest-spezial}
  0\ge u\ge -cr^{\f13}\qquad \text{in } B_R\times [0,∞),
 \end{equation}
 as well as 
 \begin{equation}\label{eq:urle0}
  u_r\le 0 \qquad \text{in } (B_R\setminus\set0)\times (0,∞),
 \end{equation}
 and for every $T>0$ there is some $c=c(T)>0$ such that
 \begin{equation}\label{eq:urge}
  u_r>-cr^{-\f{31}{28}} \qquad \text{ in } (B_R\setminus\set0)\times (0,T).
 \end{equation}
\end{lemma}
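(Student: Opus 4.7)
The plan is to extract a convergent subsequence of $(\ue)_{ε>0}$ by a diagonal argument based on the uniform local Hölder estimates proved above. Fixing an exhausting sequence of compact sets $K_j\subset (B_R\setminus\set0)\times(0,\infty)$, Lemma~\ref{lem:hoelderboundC2} yields uniform $C^{2+β,1+β/2}(K_j)$ bounds for all sufficiently small $ε$. Arzelà--Ascoli together with a diagonal extraction then produces a sequence $ε_k\searrow 0$ and a limit $u\in C^{2,1}((B_R\setminus\set0)\times(0,\infty))$ with $u_{ε_k}\to u$ in $C^{2,1}_{loc}$. In particular, $u_t=Δu+uu_r^3$ holds classically in $(B_R\setminus\set0)\times(0,\infty)$, and by refining the subsequence once more Lemma~\ref{lem:hoelderboundCalpha} gives uniform convergence on compact subsets of $(\Bbar_R\setminus\set0)\times[0,\infty)$. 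Radial symmetry of $u$ is inherited from that of each $\ue$ (Lemma~\ref{lem:ex:ue}).

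Next I would extend $u$ continuously to the singular ray $\set0\times[0,\infty)$. The sandwich $u^*\ge\ue\ge u^*-v$ from Lemma~\ref{lem:ue-bd} passes to the limit on $(\Bbar_R\setminus\set0)\times[0,\infty)$. Because $u^*(0)=0$ and $v(0,t)=0$ (the factor $r^{n-\f32}J_ν(λr)$ vanishes at $r=0$, as $n-\f32+ν>0$), both sides tend to $0$ as $r\searrow 0$ uniformly in $t\ge 0$, so setting $u(0,t):=0$ produces $u\in C(\Bbar_R\times[0,\infty))$. The outer boundary value $u(R,t)=u^*(R)$ is inherited directly from $\ue$, and for the initial condition, Remark~\ref{rem:u0eps-and-u0-coincide} guarantees $u_{0ε}=u_0$ on $B_R\setminus B_δ$ for all sufficiently small $ε$, whence $u(\cdot,0)=u_0$ on $B_R\setminus\set0$; equality at the origin then follows because \eqref{u0:closetozero} forces $u_0(0)=u^*(0)=0$.

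The remaining assertions are obtained by passing to the limit in the corresponding $ε$-level bounds. Estimate \eqref{eq:uest} is just the sandwich already established. For \eqref{eq:uest-spezial} I would combine $u\le u^*=-αr^{\f13}\le 0$ with the estimate $0\le v(r,t)\le c_1 r^{n-\f32+ν}$ on $[0,R]\times[0,\infty)$ coming from $e^{-λ^2t}\le 1$ and the near-zero asymptotics of $J_ν$; since $n-\f32+ν>\f13$ when $n\ge 2$, the function $r^{n-\f{11}6+ν}$ is bounded on $[0,R]$, and so there exists $c>0$ with $αr^{\f13}+v(r,t)\le cr^{\f13}$ on $[0,R]\times[0,\infty)$. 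Inequality \eqref{eq:urle0} follows at once from Lemma~\ref{lem:uer:nonpositive} together with the $C^{2,1}_{loc}$ convergence of $\ue$. Finally, for \eqref{eq:urge} I would apply Lemma~\ref{lem:estimate:uer:pointwise} with $p=28$, chosen so that $\f{p+3}{p}=\f{31}{28}$: the resulting bound $|\uer(r,t)|\le c(T)r^{-\f{31}{28}}$ on $(2ε,R)\times[0,T]$ passes to the limit and, combined with \eqref{eq:urle0}, yields \eqref{eq:urge}.

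The only non-routine point is the continuous extension of the limit across the singular line $\set0\times[0,\infty)$, and this is resolved cleanly by the sandwich of Lemma~\ref{lem:ue-bd} and the vanishing of $u^*$ and $v$ at $r=0$. Everything else amounts to Arzelà--Ascoli with a diagonal extraction, passage to the limit in the classical PDE, and direct translation of each $ε$-uniform estimate into its counterpart for $u$.
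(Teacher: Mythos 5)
Your proposal is correct and follows essentially the same route as the paper: a diagonal Arzel\`a--Ascoli extraction based on Lemmata~\ref{lem:hoelderboundC2} and \ref{lem:hoelderboundCalpha}, continuity at the origin via the sandwich of Lemma~\ref{lem:ue-bd} together with the vanishing of $u^*$ and $v$ at $r=0$, and the remaining estimates obtained by passing to the limit in Lemmata~\ref{lem:uer:nonpositive} and \ref{lem:estimate:uer:pointwise} (with $p=28$). Your treatment of the boundary and initial conditions and of \eqref{eq:uest-spezial} is in fact slightly more explicit than the paper's.
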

\begin{proof}
 If we apply Lemmata \ref{lem:hoelderboundC2} and \ref{lem:hoelderboundCalpha} 
 to sequences of compact sets exhausting $(B_R\setminus\set0)\times(0,∞)$ and $(\Bbar_R\setminus\set{0})\times[0,∞)$, respectively, use the Arzel\`a-Ascoli theorem and a diagonalization procedure,  
 we obtain a sequence $(ε_j)_{j\in ℕ}\searrow 0$ and a function $u\in C((\Bbar_R\setminus\set{0})\times[0,∞))\cap C^{2,1}((B_R\setminus\set0)\times(0,∞))$ such that
 \begin{align}\label{ue-to-u}
  u_{ε_j}\to u \qquad &\text{ locally uniformly in } (\Bbar_R\setminus\set{0})\times[0,∞)\\
  &\text{ and with respect to the topology of }C^{2,1}((B_R\setminus\set0)\times(0,∞)). \label{ue-to-u-C2}
 \end{align}
% $u_{ε_j}\to u$ locally uniformly in $(\Bbar_R\setminus\set{0})\times[0,∞)$ and with respect to the topology of $C^{2,1}(B_R\setminus\set0\times(0,∞))$. 
The latter convergence statement \eqref{ue-to-u-C2} together with Lemma~\ref{lem:uer:nonpositive} already entails \eqref{eq:urle0}, whereas \eqref{eq:urge} similarly results from Lemma~\ref{lem:estimate:uer:pointwise} upon the choice of $p=28$. 

 Additionally, we define $u(0,t):=0$. Then $u$ is continuous in $\Bbar_R\times[0,∞)$. In light of \eqref{ue-to-u}, only continuity at $(0,t)$ for $t\ge 0$ remains to be proven. % (And it is sufficient to prove continuity at $(0,t_0)$.) 
 Let $η>0$. Choose $δ>0$ such that $u^*(δ)-v(δ,0)>-η$. Then for every $ε\in (0,δ)$, every $r\in(0,δ)$ and every $t\ge 0$ we have $0\ge \ue(r,t)\ge \ue(δ,t)\ge u^*(δ)-v(δ,t)\ge u^*(δ)-v(δ,0)>-η$ and, by \eqref{ue-to-u}, hence $0\ge u(r,t)\ge -η$ for every $r\in(0,δ)$ and $t\ge 0$.
 
 Finally, \eqref{eq:uest} and hence \eqref{eq:uest-spezial} are obvious for $r=0$ and easily obtained from Lemma~\ref{lem:ue-bd} for $r>0$.
\end{proof}

\newcommand{\utilde}{\tilde{u}}
Theorem~\ref{thm:classical} also includes a uniqueness statement. The following lemma takes care of it. 
\begin{lemma}\label{lem:unique}
 Let $u$, $\utilde$ be functions satisfying 
 \begin{align*}
  &u,\utilde\in C^{2,1}((B_R\setminus\set0)\times(0,∞))\cap C(\Bbar_R\times[0,∞)),\\
  &\sup u_r \le 0,\quad \sup \utilde_r\le 0
 \end{align*}
 that solve \eqref{ueq}. Then $u=\utilde$. 
\end{lemma}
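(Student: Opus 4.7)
The approach is the standard one: derive a linear parabolic equation for the difference $w := u - \utilde$ and apply a maximum principle. Subtracting the two copies of \eqref{ueq} and rearranging via the identity
\[
 u u_r^3 - \utilde\,\utilde_r^3 = u_r^3\, w + \utilde \kl{u_r^2+u_r\utilde_r+\utilde_r^2}\, w_r
\]
shows that
\begin{equation*}
 w_t - \Delta w - b\, w_r - c\, w = 0 \qquad\text{in } (B_R\setminus\set{0})\times(0,\infty),
\end{equation*}
with $b := \utilde\,(u_r^2 + u_r\utilde_r + \utilde_r^2)$ and $c := u_r^3$. The decisive observation is that the uniqueness class enforces $u_r\le 0$, and hence $c\le 0$. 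This sign is what will keep the maximum-principle constant free of an exponentially divergent factor $e^{T\|c\|_\infty}$ that would ruin the estimate near the singularity at $r=0$.

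Next, for fixed $T>0$, $ε\in(0,R)$ and $δ\in(0,T)$, the compact set $(\Bbar_R\setminus B_{ε})\times[δ,T]$ lies inside the region where $u,\utilde\in C^{2,1}$, so $b$ and $c$ are bounded there. Since $c\le 0$, the weak maximum principle applied to $\pm w$ (at an interior positive maximum of $\pm w-ηt$, which would force $Lw\ge η>0$, contradicting $Lw=0$) yields
\begin{equation*}
 \sup_{(B_R\setminus B_{ε})\times[δ,T]}|w| \;\le\; \sup_{t\in[δ,T]}|w(ε,t)| \;+\; \sup_{r\in[ε,R]}|w(r,δ)|,
\end{equation*}
where the absence of a third term on the right reflects that $w\equiv 0$ on $∂B_R\times(0,\infty)$ because $u$ and $\utilde$ share the common boundary value $u^*(R)$.

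For any fixed $(r,t)\in(B_R\setminus\set{0})\times(0,T)$ I would then first let $δ\searrow 0$: since $w\in C(\Bbar_R\times[0,\infty))$ with $w(\cdot,0)\equiv 0$ (common initial datum $u_0$), the second summand vanishes. Subsequently $ε\searrow 0$ kills the first summand by uniform continuity of $w$ on the compact set $\Bbar_R\times[0,T]$ together with $w(0,t)=0$. Hence $w\equiv 0$, which is the claim.

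The only real obstacle is the potential singularity of $u_r$ and $\utilde_r$ at $r=0$: these make the coefficients $b$ and $c$ unbounded near the origin, so a naïve global maximum-principle argument fails. The sign constraint $u_r\le 0$ built into the uniqueness class rescues the argument in exactly the right way -- it forces $c=u_r^3\le 0$, so the maximum-principle estimate contains no constant depending on $\|c\|_\infty$, and the boundary error at $r=ε$ is then controlled directly by continuity of $u$ and $\utilde$ up to the singularity.
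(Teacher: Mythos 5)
Your proposal is correct and follows essentially the same route as the paper: the identical decomposition $uu_r^3-\utilde\,\utilde_r^3=cw+bw_r$ with $b=\utilde(u_r^2+u_r\utilde_r+\utilde_r^2)$ and $c=u_r^3\le 0$, followed by a maximum principle using that $w$ vanishes on the parabolic boundary including $\set{0}\times(0,\infty)$. The only difference is presentational: the paper invokes a ready-made comparison principle (\cite[Prop.~52.4]{QS}) directly on the punctured domain, whereas you carry out the exhaustion in $\varepsilon$ and $\delta$ by hand, which makes explicit how continuity up to $r=0$ compensates for the unboundedness of the coefficients there.
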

\begin{proof}
 The difference $w:=u-\utilde$ solves $w_t=Δw+bw_r+ cw$ in $(B_R\setminus\set0)\times(0,∞))$, where $b:=\utilde(u_r^2+u_r\utilde_r+\utilde_r^2)$ and $c:=u_r^3\le 0$. Moreover, $w=0$ on $(B_R\times\set{0}) \cup (∂(B_R\setminus\set{0})\times (0,∞))$, and \cite[Prop.~52.4]{QS} shows $w\le 0$. 
\end{proof}

The final piece of the proof of Theorem~\ref{thm:weak} is the combination of Lemma~\ref{lem:ex} with Lemma~\ref{lem:isweaksol}. 
\begin{lemma}\label{lem:have-found-a-weak-solution} 
 Let $n\ge 3$. 
 Then the function $u$ obtained in Lemma~\ref{lem:ex} is a weak solution of \eqref{ueq-weak}.
\end{lemma}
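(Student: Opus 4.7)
The plan is to invoke Lemma~\ref{lem:isweaksol}, since Lemma~\ref{lem:ex} already supplies the required regularity $u\in C(\Bbar_R\times[0,\infty))\cap C^{2,1}((B_R\setminus\{0\})\times(0,\infty))$, radial symmetry, and classical solvability of \eqref{ueq} on $(B_R\setminus\{0\})\times(0,\infty)$. Thus the task reduces to verifying the two integrability conditions \eqref{eq:integrabilityur} and the vanishing-flux condition \eqref{eq:integralassumption}, and this is exactly where the stronger assumption $n\ge 3$ will enter.

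The main ingredients are the pointwise bounds from Lemma~\ref{lem:ex}: the size estimate $|u(r,t)|\le c r^{1/3}$ from \eqref{eq:uest-spezial}, and the gradient estimate $|u_r(r,t)|\le c(T) r^{-31/28}$ on $(B_R\setminus\{0\})\times(0,T)$ from \eqref{eq:urge} (obtained by choosing $p=28$ in Lemma~\ref{lem:estimate:uer:pointwise}). For any $T>0$, these give in the radial variable
\[
 |u u_r^3|\le c(T)\, r^{1/3-93/28}=c(T)\,r^{-251/84}\qquad\text{and}\qquad |\nabla u|=|u_r|\le c(T)r^{-31/28}.
\]
Integrating against the radial volume element $r^{n-1}\,dr$ over $(0,R)$, the first integral converges precisely when $n>251/84\approx 2.988$, and the second when $n>31/28$. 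Hence for $n\ge 3$ both $uu_r^3$ and $\nabla u$ lie in $L^1_{\mathrm{loc}}(B_R\times[0,\infty))$, which establishes \eqref{eq:integrabilityur}.

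For \eqref{eq:integralassumption}, the same gradient bound yields
\[
 \f{1}{\varepsilon}\int_0^T\!\!\int_0^{\varepsilon} r^{n-1}|u_r(r,t)|\,dr\,dt\le \f{c(T)\,T}{\varepsilon}\int_0^{\varepsilon} r^{n-1-31/28}\,dr=\f{c(T)\,T}{n-31/28}\,\varepsilon^{\,n-59/28},
\]
and the exponent $n-59/28$ is positive iff $n>59/28\approx 2.107$, which again holds for $n\ge 3$ (with value $25/28$). So the expression tends to $0$ as $\varepsilon\searrow 0$.

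With all three hypotheses of Lemma~\ref{lem:isweaksol} verified, that lemma immediately yields \eqref{eq:weak} for every $\varphi\in C_c^\infty(B_R\times(0,\infty))$; together with the boundary and initial values built into $u$ by Lemma~\ref{lem:ex} (continuity up to the lateral boundary $\partial B_R$, attaining $u^*(R)$ there, and continuity up to $t=0$ with $u(\cdot,0)=u_0$), this gives the weak formulation \eqref{ueq-weak}. The only real obstacle is the sharpness of the exponent in \eqref{eq:urge}: it is the threshold $n>251/84$ coming from $uu_r^3$ that forces the restriction $n\ge 3$, and had the Bernstein-type bound produced a worse exponent we would have needed an even larger $n$. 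As it stands, $p=28$ is comfortably enough.
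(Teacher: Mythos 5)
Your proposal is correct and follows essentially the same route as the paper: verify \eqref{eq:integrabilityur} and \eqref{eq:integralassumption} from the pointwise bounds \eqref{eq:uest-spezial} and \eqref{eq:urge}, obtaining $|uu_r^3|\le c(T)r^{-251/84}$ and the decay rate $\varepsilon^{n-59/28}$, and then apply Lemma~\ref{lem:isweaksol}. The exponent arithmetic and the role of $n\ge 3$ match the paper's proof exactly.
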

\begin{proof}
%  
%  In order to apply Lemma~\ref{lem:isweaksol}, we have to show $u_r, uu_r^3\in L^1_{loc}(B_R\times(0,∞))$ and 
%  \begin{equation*}
%   \lim_{ε\to 0} \f1{ε} \int_0^T \int_0^{ε} r^{n-1} |u_r(r,t)| dr dt= 0. 
%  \end{equation*}
 We observe that according to \eqref{eq:urge} there is $c_1=c_1(T)$ such that 
 \[
  \f1{ε} \int_0^T \int_0^{ε} r^{n-1} |u_r(r,t)| dr dt \le \f{c_1T}{ε}\int_0^{ε} r^{n-1} r^{-\f{31}{28}} dr = \f{c_1T}{n-\f{31}{28}} ε^{n-\f{59}{28}}\to 0
 \]
 as $ε\to 0$. 
 By \eqref{eq:urge} and \eqref{eq:uest-spezial}
 \[
  |uu_r^3|\le c r^{\f13}r^{3\cdot(-\f{31}{28})}=c r^{\f{28-9\cdot31}{84}}= cr^{-\f{251}{84}}\qquad \text{in } B_R\times(0,∞),
 \]
 and because $-\f{251}{84}=-3+\f1{84} \ge -n$, hence $uu_r^3\in L^1_{loc}(B_R\times(0,∞))$. Finally, $|u_r|\le c r^{-\f{31}{28}} \in L^1_{loc}$ and Lemma~\ref{lem:isweaksol} becomes applicable. 
\end{proof}

\section{Proofs of the theorems}\label{sec:theoremproofs}

% \begin{proof}[Proof of Theorem~\ref{thm:classical}]
%  Solvability is ensured by Lemma~\ref{lem:ex}, which by means of \eqref{eq:uest} also ensures presence of the gradient singularity. 
%  Uniqueness, on the other hand, has been asserted in Lemma~\ref{lem:unique}.
% \end{proof}

\begin{proof}[Proof of Theorem~\ref{thm:classical} and Remark~\ref{rem:form-of-the-singularity-remains}]
      Solvability is ensured by Lemma~\ref{lem:ex}, which by means of \eqref{eq:uest} also ensures that for every $t>0$ there are $c_1=c_1(t)>0$ and $c_2=c_2(t)>0$ such that 
      \[
       0\ge u^*(r)-u(r,t)\ge -v(r,t)\ge -c_1r^{n-\f32}J_{ν}(λr)\ge -c_2 r^{n-\f32+ν} \quad \text{for every } r\in [0,R].
      \]
      (The last estimate therein used $λR<x_0$ and \cite[p. 360, (9.1.7)]{abramowitz_stegun}.) 
      This proves Remark~\ref{rem:form-of-the-singularity-remains} and implies  \eqref{eq:gradientsingularity}.\\       
       Uniqueness of solutions, on the other hand, has been asserted in Lemma~\ref{lem:unique}.
     \end{proof}

\begin{proof}[Proof of Theorem~\ref{thm:weak}]
 This is the outcome of Lemma~\ref{lem:have-found-a-weak-solution}. 
\end{proof}

\begin{proof}[Proof of Theorem~\ref{thm:convergence}]
 The construction of $u$ during the proof of Theorem~\ref{thm:classical} had ensured that $u^*(r)\ge u(r,t)\ge u^*(r)-v(r,t)$ 
for all $(r,t)\in [0,R]\times[0,∞)$ (cf. \eqref{eq:uest}), and Theorem~\ref{thm:convergence} can be seen from the explicit definition \eqref{def:v} of $v$.
\end{proof}

\textbf{Acknowledgements.} The first author was supported in part by the Slovak
Research and Development Agency under the contract No. APVV-14-0378 and by the VEGA grant
1/0347/18. 
%\bibliographystyle{abbrv}
%\bibliography{stsing.bib}

\begin{thebibliography}{10}

\bibitem{abramowitz_stegun}
{Handbook of mathematical functions with formulas, graphs, and mathematical
  tables. 10th printing, with corrections.}
\newblock {National Bureau of Standards. A Wiley-Interscience Publication. New
  York etc.: John Wiley \& Sons. xiv, 1046 pp. (1972).}, 1972.

\bibitem{AF}
S.~B. {Angenent} and M.~{Fila}.
\newblock {Interior gradient blow-up in a semilinear parabolic equation.}
\newblock {\em {Differ. Integral Equ.}}, 9(5):865--877, 1996.

\bibitem{Bernstein}
S.~{Bernstein}.
\newblock {Sur la g\'en\'eralisation du probl\`eme de Dirichlet. Deuxi\`eme
  partie.}
\newblock {\em {Math. Ann.}}, 69:82--136, 1910.

\bibitem{C}
E.~{Chasseigne}.
\newblock {Classification of razor blades to the filtration equation -- the
  sublinear case.}
\newblock {\em {J. Differ. Equations}}, 187(1):72--105, 2003.

\bibitem{CV1}
E.~{Chasseigne} and J.~L. {Vazquez}.
\newblock {Theory of extended solutions for fast-diffusion equations in optimal
  classes of data. Radiation from singularities.}
\newblock {\em {Arch. Ration. Mech. Anal.}}, 164(2):133--187, 2002.

\bibitem{CV2}
E.~{Chasseigne} and J.~L. {V\'azquez}.
\newblock {The pressure equation in the fast diffusion range.}
\newblock {\em {Rev. Mat. Iberoam.}}, 19(3):873--917, 2003.

\bibitem{FL}
M.~{Fila} and J.~{Lankeit}.
\newblock {Continuation beyond interior gradient blow-up in a semilinear
  parabolic equation.}
\newblock arXiv:1902.01127.

\bibitem{FTY}
M.~{Fila}, J.~{Takahashi}, and E.~{Yanagida}.
\newblock {Solutions with moving singularities for equations of porous medium
  type.}
\newblock {\em {Nonlinear Anal., Theory Methods Appl., Ser. A}}, 179:237--253, 2019.

\bibitem{friedman_parabolic}
A.~{Friedman}.
\newblock {\em {Partial differential equations of parabolic type.}}
\newblock {Englewood Cliffs, N.J.: Prentice-Hall, Inc.},
% XIV, 347 p. (1964).},
  1964.

\bibitem{HK}
K.~M. {Hui} and S.~{Kim}.
\newblock {Asymptotic large time behavior of singular solutions of the fast
  diffusion equation.}
\newblock {\em {Discrete Contin. Dyn. Syst.}}, 37(11):5943--5977, 2017.

\bibitem{KT1}
T.~{Kan} and J.~{Takahashi}.
\newblock {On the profile of solutions with time-dependent singularities for
  the heat equation.}
\newblock {\em {Kodai Math. J.}}, 37(3):568--585, 2014.

\bibitem{KT2}
T.~{Kan} and J.~{Takahashi}.
\newblock {Time-dependent singularities in semilinear parabolic equations:
  behavior at the singularities.}
\newblock {\em {J. Differ. Equations}}, 260(10):7278--7319, 2016.

\bibitem{KT3}
T.~{Kan} and J.~{Takahashi}.
\newblock {Time-dependent singularities in semilinear parabolic equations:
  existence of solutions.}
\newblock {\em {J. Differ. Equations}}, 263(10):6384--6426, 2017.

\bibitem{LSU}
O.~A. Lady{\v{z}}enskaja, V.~A. Solonnikov, and N.~N. Ural{\cprime}ceva.
\newblock {\em Linear and quasilinear equations of parabolic type}.
\newblock Translated from the Russian by S. Smith. Translations of Mathematical
  Monographs, Vol. 23. American Mathematical Society, Providence, R.I., 1968.

\bibitem{Lieberman_AnnScPisa_86}
G.~M. {Lieberman}.
\newblock {The first initial-boundary value problem for quasilinear second
  order parabolic equations.}
\newblock {\em {Ann. Sc. Norm. Super. Pisa, Cl. Sci., IV. Ser.}}, 13:347--387,
  1986.

\bibitem{QV}
F.~{Quir\'os} and J.~L. {V\'azquez}.
\newblock {Asymptotic behaviour of the porous media equation in an exterior
  domain.}
\newblock {\em {Ann. Sc. Norm. Super. Pisa, Cl. Sci., IV. Ser.}},
  28(2):183--227, 1999.

\bibitem{QS}
P.~{Quittner} and {\relax Ph}.~{Souplet}.
\newblock {\em {Superlinear parabolic problems. Blow-up, global existence and
  steady states.}}
\newblock Basel: Birkh\"auser, 2007.

\bibitem{SY1}
S.~{Sato} and E.~{Yanagida}.
\newblock {Solutions with moving singularities for a semilinear parabolic
  equation.}
\newblock {\em {J. Differ. Equations}}, 246(2):724--748, 2009.

\bibitem{SY2}
S.~{Sato} and E.~{Yanagida}.
\newblock {Forward self-similar solution with a moving singularity for a
  semilinear parabolic equation.}
\newblock {\em {Discrete Contin. Dyn. Syst.}}, 26(1):313--331, 2010.

\bibitem{SY3}
S.~{Sato} and E.~{Yanagida}.
\newblock {Singular backward self-similar solutions of a semilinear parabolic
  equation.}
\newblock {\em {Discrete Contin. Dyn. Syst., Ser. S}}, 4(4):897--906, 2011.

\bibitem{TY1}
J.~{Takahashi} and E.~{Yanagida}.
\newblock {Time-dependent singularities in the heat equation.}
\newblock {\em {Commun. Pure Appl. Anal.}}, 14(3):969--979, 2015.

\bibitem{TY2}
J.~{Takahashi} and E.~{Yanagida}.
\newblock {Time-dependent singularities in a semilinear parabolic equation with
  absorption.}
\newblock {\em {Commun. Contemp. Math.}}, 18(5):27, 2016.

\bibitem{VW}
J.~L. {V\'azquez} and M.~{Winkler}.
\newblock {The evolution of singularities in fast diffusion equations:
  infinite-time blow-down.}
\newblock {\em {SIAM J. Math. Anal.}}, 43(4):1499--1535, 2011.

\end{thebibliography}
% Damit du dich nicht mit bibtex herumärgern musst, habe ich die automatisch erzeugte 
% Referenzenliste hier eingefügt. 
%\footnotesize
\def\cprime{$'$}

\end{document}